\newcommand\numberthis{\addtocounter{equation}{1}\tag{\theequation}}
\newtheorem{theorem}{Theorem}[section]
\newtheorem{definition}[theorem]{Definition}
\newtheorem{proposition}[theorem]{Proposition}
\newtheorem{corollary}[theorem]{Corollary}
\theoremstyle{definition}
\newtheorem{remark}[theorem]{Remark}
\newtheorem{notation}[theorem]{Notation}
\def\R{\mathbb{R}}
\def\M{\mathbb{M}}
 \def\Op{\mathfrak{Op}} 
\def\X{\mathcal X}
\def\g{\mathcal{g}}
\def\Rd{\mathbb{R}^d}
\def\Zd{\mathbb{Z}^d}
\def\Z{\mathbb{Z}}
\def\bb1{{\rm{1}\hspace{-3pt}\mathbf{l}}}
\def\Ie0{[-\epsilon_0,\epsilon_0]}
\def\supp{\text{\sf supp}}
\def\supp{\mathop{\rm supp} \nolimits} 
\def\BC2{\mathcal L \big(\mathbb{C}^2\big)}
\def\MAF{\mathbb{M}^A[F]}
\def\tGamma{\widetilde{\Gamma}}
\def\talpha{\tilde{\alpha}}
\def\tbeta{\tilde{\beta}}
\def\tgamma{\tilde{\gamma}}
\def\tmu{\tilde{\mu}}
\def\tnu{\tilde{\nu}}
\def\beq{\begin{equation}}
\def\eeq{\end{equation}}
\numberwithin{equation}{section}
\title[Magnetic pseudo-differential operators via tight Gabor frames]{Matrix representation of magnetic pseudo-differential operators via tight Gabor frames}
\author{Horia D. Cornean}
\address{Department of Mathematical Sciences, Aalborg University, DK-9220 Aalborg, Denmark. Orcid: 0000-0003-2700-8785}
\email{cornean@math.aau.dk}
\author{Bernard Helffer}
\address{Laboratoire de Math{\'e}matiques Jean Leray, Universit{\'e} de Nantes and CNRS, Nantes, France.}
\email{Bernard.Helffer@univ-nantes.fr}
\author{Radu Purice}
\address{\enquote{Simion Stoilow} Institute of Mathematics of the Romanian Academy, P.O. Box 1-764, 014700 Bucharest, Romania. Orcid: 0000-0003-2700-8785}
\email{Radu.Purice@imar.ro}
\thanks{HC and RP acknowledge support from Grant 8021-00084B of the Independent Research Fund Denmark $|$ Natural Sciences. RP acknowledges partial support by the CNRS IRN ECO-Math. The authors thank M. Lein and G. Lee for very useful comments which significantly improved the manuscript.}
\subjclass[2020]{Primary: 81Q10, 81Q15. Secondary: 35S05}
\keywords{Gabor frames, pseudodifferential operators, magnetic fields.}
\begin{document}
\maketitle
\begin{abstract}
In this paper we use some ideas from \cite{FG-97, G-06} and consider the description of H\"{o}rmander type pseudo-differential operators on $\Rd$ ($d\geq1$), including the case of the  magnetic pseudo-differential operators introduced in \cite{IMP-1, IMP-19}, with respect to a tight Gabor frame. We show that all these operators can be identified with some infinitely dimensional matrices whose elements are strongly localized near the diagonal. Using this matrix representation, one can give short and elegant proofs to classical results like the Calder{\'o}n-Vaillancourt theorem and Beals' commutator criterion, { and also establish local trace-class criteria.} 
\end{abstract}

\section{Introduction}

\subsection{Main goals}
In this work we continue the study of Gabor frame decomposition of Pseudo-Differential Operators (in short $\Psi$DO's) on $\Rd$ ($d\geq1$), as proposed in \cite{FG-97, G-06}, in order to characterize  H\"{o}rmander type $\Psi$DO's with symbols in class $S^p_0(\R^{2d})$, with $p\in\R$, \cite{H-3}, including the \say{magnetic twisted case} introduced in \cite{MP-1,IMP-1,IMP-19}. 
We recall that these are symbols $a(\xi,x)$ such that $(1+\xi^2)^{{ -p/2}}\, a(\xi,x)$ is uniformly bounded, but no decay is gained by differentiation, see Definition \ref{D-Hsymb}. Our main result (Theorem \ref{T-m-Gabor-frame}) shows that their infinitely dimensional matrices in the tight \say{magnetic} Gabor frame \eqref{DF-G-frame} that we consider, are strongly localized around their diagonal, with some precise growth condition.

 This matrix representation leads almost immediately to a short proof for the \say{magnetic} version of the Calder\'on-Vaillancourt Theorem in \cite{IMP-1} (Theorem \ref{T-m-CV-Thm} in this paper) which completes our previous result in \cite{CHP-3} regarding the Beals commutator criterion (Theorem \ref{T-recBeals} in this paper).

 At the same time, we shed new light on some previous results obtained in \cite{CIP, CHP-3, CGSS, CMM} and we include some developments of the ideas which were introduced there. For example, in Corollary \ref{coro1} we obtain in a straightforward way that the magnetic Moyal product (see \eqref{DF-mMprod}) of a symbol of class $S_0^p$ with a symbol of class $S_0^q$ produces a symbol of class $S_0^{p+q}$. { Moreover, in Theorem \ref{thm-trace} we give a short and straightforward proof of the fact that if $p<-d$, then the corresponding pseudo-differential operators are locally trace class. We also hope that our approach could be relevant for the further development of the magnetic super-operator calculus \cite{LG}.}
 
 Let us emphasize that the type of arguments we develop in this paper are related in spirit with the \say{partition of unity} techniques developed in \cite{Bo-96-97, BoCh, BoLe, Le-LN-19}.

\subsection{General notation}

 For  $N\in\mathbb{N}\setminus\{0\}$, let $\mathscr{S}(\R^N)$ be the space of Schwartz test functions on $\R^N$ with the canonical Fr\'{e}chet topology and $\mathscr{S}^\prime(\R^N)$ its topological dual with its strong dual topology and let us denote by $\langle\cdot\,,\,\cdot\rangle_{{\mathscr{S}', \mathscr{S}}}:\mathscr{S}^\prime(\R^N)\times\mathscr{S}(\R^N)\rightarrow\mathbb{C}$ the canonical duality map.
We denote by $\mathcal{L}(\mathcal{V}_1;\mathcal{V}_2)$ the space of linear continuous operators between the topological vector spaces $\mathcal{V}_1$ and $\mathcal{V}_2$ with its strong topology (of uniform convergence on bounded sets). 

For some $d\geq2$ we  consider the $d$-dimensional real affine space $\X$ that we shall freely identify with $\Rd$ considering fixed a \say{base point}. Let $\Xi:=\X\times\X^*$ where $\X^*$ is the dual space of $\Rd$. We shall always distinguish between position variables and momentum variables. We recall the notation $<x>:=\sqrt{1+|x|^2}$ and similarly for $\xi\in\X^*$. We use the notation $\Xi:=\X\times\X^*$.

We shall work with the usual Lebesgue measure on $\X$ and the associated Hilbert space $L^2(\X)$ with the scalar product considered anti-linear in the first factor and denoted by $\big(\cdot,\cdot\big)_{L^2(\X)}$. We notice that:
\beq\label{F-dual-pscal}
\langle\overline{f},\phi\rangle_{{\mathscr{S}', \mathscr{S}}}\,=\,\big(f,\phi\big)_{L^2(\R^d)},\quad\forall(f,\phi)\in L^2(\R^d)\times\mathscr{S}(\R^d).
\eeq

We will use the H\"{o}rmander multi-index notation $\partial^a_x:=\partial_{x_1}^{a_1}\cdot\ldots\cdot\partial_{x_d}^{a_d}$ for $x\in\X$  and $\partial^a_{\xi}:=\partial_{\xi_1}^{a_1}\cdot\ldots\cdot\partial_{\xi_d}^{a_d}$ for $\xi\in\X^*$ and $|a|:=a_1+\ldots+a_d$ for any $a\in\mathbb{N}^d$.\\
Given any measurable function $F:\X\rightarrow\mathbb{C}$ we denote by $F(Q)$ the operator of multiplication with the function $F$.  Let $\mathcal L (\mathcal{H})$ be the $C^*$-algebra of bounded linear operators and $\mathbb{U}(\mathcal{H})$ the group of unitary operators on the complex Hilbert space $\mathcal{H}$. 

Let us fix our notations and normalization for the Fourier transform:
\beq\begin{split}
&\mathcal{F}_\X:L^1(\X)\rightarrow C(\X^*),\quad\big(\mathcal{F}_{\X}f\big)(\xi):=(2\pi)^{-d/2}\int_{\X}dx\,e^{-i<\xi,x>}\,f(x),\\
&\mathcal{F}_{\X^*}:L^1(\X^*)\rightarrow C(\X),\quad\big(\mathcal{F}_{\X^*}\hat{f}\big)(x):=(2\pi)^{-d/2}\int_{\X^*}d\xi\,e^{i<\xi,x>}\,\hat{f}(\xi).
\end{split}\eeq
They have unitary extensions to $L^2$ which are inverse to each other.

\subsection{The magnetic field.} \label{ss1.3}
We shall consider \textit{\say{regular} magnetic fields} $B$, described by smooth closed 2-forms on $\X\cong\Rd$ which have components of class $BC^\infty(\X)$, i.e.:
\begin{align}\label{dhc1}
B:=\sum_{1\leq j,k\leq d} B_{jk}(x) dx_j\wedge dx_k\,, \quad |||B|||_N:=\max_{1\leq j,k\leq d}\Vert B_{jk}\Vert_{BC^N(\X)}<\infty,\, \forall \; N\geq 0.
\end{align}
  The topological triviality of the affine space $\X$ implies that these 2-forms are also exact, hence we can always find a 1-form
  \begin{subequations}\label{dc1'}
   \begin{equation}
   A=\sum_j A_j(x) dx_j
   \end{equation}  on $\X$ such that 
$$B=dA,\mbox{  i.e. } B_{jk}(x)=(\partial_jA_k)(x)-(\partial_kA_j)(x)\,.
$$
This choice is far from being unique and we may consider gauge transformations $A\mapsto A^\prime=A+dF$ with $F\in C^\infty(\X)$ so that $B=dA=dA^\prime$. The following explicit choice:
\beq
A_k(x)\,=\,\underset{1\leq j\leq d}{\sum}\int_0^1\,s\, x_j\, B_{jk}(sx)\,ds \,,
\eeq
\end{subequations}
proves that for any regular $B$ we can always choose its vector potential to have smooth components which grow at most polynomially at infinity.

\section{The Gabor frame}
\subsection{Definition}
We shall consider the lattice $\Gamma=\underset{1\leq j\leq d}{\sum}\Z \mathfrak{e}_j\cong\Zd$ defined by the canonical orthonormal basis $\{\mathfrak{e}_j\}_{1\leq j\leq d}$ of $\X\cong\Rd$ and let $\Gamma_*$ be the dual lattice:
$$
\Gamma_*:=\big\{\gamma^*\in\X^*\,\mbox{ s.t.}\,<\gamma^*,\gamma>\in2\pi\Z\,,\forall\gamma\in\Gamma\big\}
.
$$

Once fixed the lattice $\Gamma\subset\X$ as above, let us choose a quadratic partition of unity associated with  the lattice $\Gamma$, i.e. a function $\mathcal{g}\in C^\infty_0(\X)$ such that
\beq\label{dc3}
\supp\mathcal{g}\subset(-1,1)^d,\quad \underset{\gamma\in\Gamma}{\sum}\,\mathcal{g}(x-\gamma)^2\,=\,1,\quad\forall x\in\X.
\eeq
With any $f\in L^2(\X)$ we associate the $\Gamma$-indexed sequence:
$$
f_\gamma=\mathcal{g}\,\tau_{\gamma}(f)\,,\quad (\tau_\gamma f)(x)=f(x+\gamma), \quad \gamma\in\Gamma.
$$
Each $f_\gamma$ has compact support in $(-1,1)^d\subset (-\pi,\pi)^d$.  On this support it coincides with its own $2\pi \Z^d$ periodization  that we denote by $  \mathring{f}_\gamma$.  We can then define its Fourier sequence:
\beq\label{dc1}
\widehat{[f_\gamma]}_{\gamma^*}\,:=\,\big (\vartheta_{\gamma^*}\, ,\, f_\gamma\big )_{L^2((-\pi,\pi)^d)},\quad \gamma^*\in\Gamma_*,\quad \vartheta_{\gamma^*}(x):=(2\pi)^{-d/2}e^{i(2\pi)^{-1}<\gamma^*,x>}
\eeq
for which we can write
\beq\label{dc2}
 \mathring{f}_\gamma =\underset{\gamma^*\in\Gamma_*}{\sum}\,\widehat{[f_\gamma]}_{\gamma^*}\,{\vartheta_{\gamma^*}}\,,
\eeq
with convergence of the series in $L^2$.

We are now ready to introduce our "magnetic" Gabor frame (for a general introduction to Gabor frames, see \cite{Chr}). We use an extra uni-modular factor containing a 'local gauge', which reminds on {the} choice proposed by Luttinger \cite{Lu}.
Given $A$ as in \eqref{dc1'}, we shall consider the following family of unitary operators on $L^2(\X)$, indexed by $\gamma\in\Gamma$:
$$
\big(\mathbf{\Lambda}^A_{\gamma}f\big)(x)\,:= \Lambda^A(x,\gamma)f(x),\quad\forall f\in L^2(\X).
$$
where { we use the shortcut notation:}
\beq\label{DF-LambdaA}
\Lambda^A(x,\gamma):=\,e^{-i\int_{[x,\gamma]}A}\,.
\eeq
{ We warn the reader that in \cite{CHP-3} we used the notation $e^{i\varphi(x,\gamma)}$ instead, with $\varphi(x,\gamma)=\int_{[\gamma,x]}A$; see \cite[Eq. (1.4)]{CHP-3}.}

\begin{definition}\label{D-G-frame}
Given $\Gamma\subset\X$ and $\mathcal{g}\in C^\infty_0(\X;[0,1])$ as in \eqref{dc3}, the family of functions:
\beq\label{DF-G-frame}
\mathcal{G}^A_{\gamma,\gamma^*}\,:=\,\mathbf{\Lambda}^A_\gamma\, \tau_{-\gamma}(\vartheta_{\gamma^*}\,\mathcal{g})\in\mathscr{S}(\X)
\eeq
is called the magnetic Gabor frame on $L^2(\X)$ associated with $(\Gamma, \mathcal{g})$.
\end{definition}
 We shall use the notation $\tGamma:=\Gamma\times\Gamma_*$, with elements of the form $\talpha:=(\alpha,\alpha^*),\tbeta:=(\beta,\beta^*),\ldots$. { We again warn the reader about two other changes of notation compared to \cite{CHP-3}. There, we directly identified an element of the form $(2\pi)^{-1}\gamma^*$ with some $m\in \Z^d$, thus the Gabor frame elements introduced in \cite{CHP-3} are indexed by two copies of $\Z^d$ instead of $\Gamma$ and $\Gamma_*$. Also, in that paper, the translation $\tau_\gamma$ acts like $(\tau_\gamma f)(x)=f(x-\gamma)$.  }

\subsection{Properties of the magnetic Gabor frame.}
 In this subsection we extend a technical result of \cite{CHP-3} in order to cover the $L^2$ case.

\begin{proposition}\label{P-Parseval-frame} We have the following properties: 

	\noindent {\rm (i)}.  The Gabor frame $\Big\{\mathcal{G}^A_{\tgamma}
	\Big\}$
	indexed by $\tgamma:=(\gamma,\gamma^*)\in\tGamma$ is a Parseval frame in $L^2(\X)$, i.e. the map: $$\mathfrak{U}^A_{\mathcal{g},\Gamma}:L^2(\X)\ni f\mapsto\big\{\big(\mathcal{G}^A_{\talpha}\,,\,f\big)_{L^2(\X)}\big\}\in\ell^2\big(\tGamma\big)$$ is an isometry.

	\noindent {\rm (ii)}. Given the magnetic Gabor frame $\Big\{\mathcal{G}^A_{\tgamma}
	\Big\}$,  we have for any $f\in L^2(\X)$ the identity:
	$$
	f\,=\,\underset{\talpha\in\tGamma}{\sum}\,\big(\mathcal{G}^A_{\talpha}\,,\,f\big)_{L^2(\X)}\,\mathcal{G}^A_{\talpha}
	$$
	with the above series converging in the $L^2(\X)$ norm.
\end{proposition}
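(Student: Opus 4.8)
The plan is to prove (i) — that the analysis map $\mathfrak{U}^A_{\mathcal{g},\Gamma}$ is an isometry — by a direct computation of $\sum_{\talpha\in\tGamma}\big|\big(\mathcal{G}^A_{\talpha},f\big)_{L^2(\X)}\big|^2$, and then to deduce (ii) as a formal consequence of (i) via the identity $\big(\mathfrak{U}^A_{\mathcal{g},\Gamma}\big)^*\mathfrak{U}^A_{\mathcal{g},\Gamma}=\mathrm{id}_{L^2(\X)}$. The central observation is that the magnetic phase $\Lambda^A(x,\gamma)$ in \eqref{DF-LambdaA} is unimodular, so the operator $\mathbf{\Lambda}^A_\gamma$ is unitary and disappears from every modulus; the statement therefore reduces to the non-magnetic data encoded in \eqref{dc1}--\eqref{dc3}.

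First I would fix $\gamma\in\Gamma$ and sum only over $\gamma^*\in\Gamma_*$. Writing out $\big(\mathcal{G}^A_{\gamma,\gamma^*},f\big)_{L^2(\X)}$ from \eqref{DF-G-frame}, moving the multiplier $\overline{\Lambda^A(x,\gamma)}$ onto $f$ (the scalar product being anti-linear in the first factor), and performing the change of variables $y=x-\gamma$, I expect to obtain $\big(\mathcal{G}^A_{\gamma,\gamma^*},f\big)_{L^2(\X)}=\big(\vartheta_{\gamma^*},h_\gamma\big)_{L^2(\X)}$ with $h_\gamma(y):=\overline{\Lambda^A(y+\gamma,\gamma)}\,\mathcal{g}(y)\,(\tau_\gamma f)(y)$. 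Since $h_\gamma$ is supported in $\supp\mathcal{g}\subset(-\pi,\pi)^d$, this inner product coincides with the Fourier coefficient $\big(\vartheta_{\gamma^*},h_\gamma\big)_{L^2((-\pi,\pi)^d)}$ of \eqref{dc1}; as $\{\vartheta_{\gamma^*}\}_{\gamma^*\in\Gamma_*}$ is the standard orthonormal basis of $L^2((-\pi,\pi)^d)$, Parseval's identity on the cell gives $\sum_{\gamma^*}\big|\big(\mathcal{G}^A_{\gamma,\gamma^*},f\big)\big|^2=\|h_\gamma\|^2_{L^2}$. Because $|\Lambda^A|=1$, this equals $\int_\X\mathcal{g}(y)^2\,|f(y+\gamma)|^2\,dy$.

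Next I would sum over $\gamma\in\Gamma$. All terms being non-negative, the double series may be reorganized freely (Tonelli). After the substitution $x=y+\gamma$ the $\gamma$-sum produces the factor $\sum_{\gamma\in\Gamma}\mathcal{g}(x-\gamma)^2$, which equals $1$ by the quadratic partition of unity \eqref{dc3}; hence $\sum_{\talpha\in\tGamma}\big|\big(\mathcal{G}^A_{\talpha},f\big)\big|^2=\int_\X|f(x)|^2\,dx=\|f\|^2_{L^2(\X)}$, proving (i). For (ii), isometry of $\mathfrak{U}^A_{\mathcal{g},\Gamma}$ means $\big(\mathfrak{U}^A_{\mathcal{g},\Gamma}\big)^*\mathfrak{U}^A_{\mathcal{g},\Gamma}=\mathrm{id}$; identifying the adjoint as the synthesis map $\big(\mathfrak{U}^A_{\mathcal{g},\Gamma}\big)^*\{c_{\talpha}\}=\sum_{\talpha}c_{\talpha}\,\mathcal{G}^A_{\talpha}$ (a norm-convergent series, since the adjoint of a bounded operator is bounded on $\ell^2(\tGamma)$) and applying it to the sequence $\{(\mathcal{G}^A_{\talpha},f)\}$ yields the reconstruction formula with $L^2$ convergence.

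The computation has no deep obstacle; the only places demanding care are the bookkeeping of the unitary multiplier together with the translation in the change of variables, so that the magnetic phase is seen to cancel in modulus, and the identification of the full-space inner product $\big(\vartheta_{\gamma^*},h_\gamma\big)_{L^2(\X)}$ with the torus Fourier coefficient, which rests entirely on the support condition $\supp\mathcal{g}\subset(-1,1)^d$. The interchange of the two summations is justified by positivity, and the convergence claims in (ii) follow from boundedness of the synthesis operator.
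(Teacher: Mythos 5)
Your proposal is correct and follows essentially the same route as the paper: both reduce the Gabor coefficients, via the support condition on $\mathcal{g}$, to torus Fourier coefficients of $\mathcal{g}\,\tau_\gamma\big(\overline{\Lambda^A(\cdot,\gamma)}f\big)$, apply Parseval's identity on the fundamental cell, and then sum the quadratic partition of unity (the paper proves the polarized identity for a pair $(v,f)$ and you treat the diagonal case $v=f$, which is all that isometry requires and lets you invoke Tonelli). The only point to tighten is in (ii): boundedness of the adjoint does not by itself identify $\big(\mathfrak{U}^A_{\mathcal{g},\Gamma}\big)^*$ with a norm-convergent series — you still need the Cauchy-tail bound $\big\|\sum_{\talpha\in F}c_{\talpha}\,\mathcal{G}^A_{\talpha}\big\|_{L^2(\X)}\leq\big(\sum_{\talpha\in F}|c_{\talpha}|^2\big)^{1/2}$ over finite sets $F$, obtained from the Bessel bound of (i) plus Cauchy--Schwarz, which is exactly the explicit estimate the paper carries out.
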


\begin{proof}
	Instead of (i), we shall prove the following apparently stronger result:
	\beq\label{F-PF}
	\forall v,f\in L^2(\X),\quad\big(v,f\big)_{L^2(\X)}=\underset{\talpha\in\tGamma}{\sum}\,\big(v\,,\,\mathcal{G}^A_{\talpha}\big)_{L^2(\X)}\,\big(\mathcal{G}^A_{\talpha}\,,\,f\big)_{L^2(\X)}
	\eeq
with the series converging for the $\ell^2(\tGamma)$ norm. Let us compute the finite sums:
\begin{align*}
	&\underset{|\alpha|\leq N}{\sum}\,\underset{|\alpha^*|\leq M}{\sum}\,\big(v\,,\,\mathcal{G}^A_{\alpha,\alpha^*}\big)_{L^2(\X)}\,\big(\mathcal{G}^A_{\alpha,\alpha^*}\,,\,f\big)_{L^2(\X)}\\
	&\hspace*{1cm}=(2\pi)^{-d}\int_{\X}dx\int_{\X}dy\,\overline{v(x)}\,\,f(y)\underset{|\alpha|\leq N}{\sum}\,\Big(\Lambda^A(x,\alpha)\overline{\Lambda^A(y,\alpha)}\Big)\mathcal{g}(x-\alpha)\mathcal{g}(y-\alpha) \times \\
	&\hspace*{5cm}\times \underset{|\alpha^*|\leq M}{\sum}\,e^{(i/(2\pi))<\alpha^*,x-y>}.
\end{align*}
Define 
$$F_\alpha:= \mathcal{g}\, \tau_\alpha \Big (\overline{\Lambda^A(\cdot ,\alpha)} f\Big ),\quad V_\alpha:= \mathcal{g}\, \tau_\alpha \Big (\overline{\Lambda^A(\cdot ,\alpha)} v\Big ).$$
Then the above double series reads as 
\begin{align*}
&\underset{|\alpha|\leq N}{\sum}\,\underset{|\alpha^*|\leq M}{\sum}\,
\overline{\widehat{[V_\alpha]}_{\alpha^*}}\, \widehat{[F_\alpha]}_{\alpha^*}.
\end{align*}
The hypothesis concerning the functions $v,f,\mathcal{g},{\Lambda}^A(\cdot,\alpha)$ and the Parseval identity related to \eqref{dc2}  imply that the limit of the integral for $M\nearrow\infty$ exists and is equal to:
\begin{align*}
	\underset{M\nearrow\infty}{\lim}\,\underset{|\alpha|\leq N}{\sum}\,\underset{|\alpha^*|\leq M}{\sum}\,\big(v\,,\,\mathcal{G}^A_{\alpha,\alpha^*}\big)_{L^2(\X)}\,\big(\mathcal{G}^A_{\alpha,\alpha^*}\,,\,f\big)_{L^2(\X)}&=\underset{|\alpha|\leq N}{\sum}\,\int_{\X}dx\,\overline{v(x+\alpha)}\,f(x+\alpha )\,\mathcal{g}(x)^2\\ &=\int_{\X}dx\,\overline{v(x)}\,f(x)\,\underset{|\alpha|\leq N}{\sum}\,\mathcal{g}(x-\alpha)^2.
\end{align*}
The quadratic $\Gamma$-partition of unity property \eqref{dc3} of $\mathcal{g}$ together with the Dominated Convergence Theorem imply \eqref{F-PF} and thus (i).

Now let us prove (ii). From \eqref{F-PF} we know that the sequence
	$$
	f_{N,M}\,:=\,\underset{|\alpha|\leq N}{\sum}\,\underset{|\alpha^*|\leq M}{\sum}\,\big(\mathcal{G}^A_{\alpha,\alpha^*}\,,\,f\big)_{L^2(\X)}\,\mathcal{G}^A_{\alpha,\alpha^*}\,\in\,L^2(\X)
	$$
	converges to $f\in L^2(\X)$ in the weak topology on $L^2(\X)$. To prove its convergence for the norm topology on $L^2(\X)$ we show that it is a Cauchy sequence. Let us choose $N<N'$ and $M<M'$ and compute:
	\begin{align*}
		&\big\|f_{N',M'}-f_{N,M}\big\|_{L^2(\X)}=\underset{\|v\|_{L^2(\X)}=1}{\sup}\,\big|\big(v\,,\, f_{N',M'}-f_{N,M}\big)_{L^2(\X)}\big|\\
  &=\underset{\|v\|_{L^2(\X)}=1}{\sup}\,\Big|\underset{|\alpha|\leq N'}{\sum}\,\underset{|\alpha^*|\leq M'}{\sum}\,\big(v\,,\,\mathcal{G}^A_{\alpha,\alpha^*}\big)_{L^2(\X)}\,\big(\mathcal{G}^A_{\alpha,\alpha^*}\,,\,f\big)_{L^2(\X)}- \\
  &\qquad - \underset{|\alpha|\leq N}{\sum}\,\underset{|\alpha^*|\leq M}{\sum}\,\big(v\,,\,\mathcal{G}^A_{\alpha,\alpha^*}\big)_{L^2(\X)}\,\big(\mathcal{G}^A_{\alpha,\alpha^*}\,,\,f\big)_{L^2(\X)}\Big|\\
		&=\underset{\|v\|_{L^2(\X)}=1}{\sup}\,\Big|\Big (\underset{N<|\alpha|\leq N'}{\sum}\,\underset{|\alpha^*|\leq M'}{\sum}\, +\underset{|\alpha|\leq N}{\sum}\,\underset{M<|\alpha^*|\leq M'}{\sum}\Big )\,\big(v\,,\,\mathcal{G}^A_{\alpha,\alpha^*}\big)_{L^2(\X)}\,\big(\mathcal{G}^A_{\alpha,\alpha^*}\,,\,f\big)_{L^2(\X)}\Big|\\
		&\leq\left(\Big (\underset{N<|\alpha|}{\sum}\,\underset{\alpha^*\in \Gamma_*}{\sum}  +\underset{\alpha\in \Gamma }{\sum}\,\underset{M<|\alpha^*|}{\sum}\,\Big )\,\big|\big(\mathcal{G}^A_{\alpha,\alpha^*}\,,\,f\big)_{L^2(\X)}\big|^2\right)^{1/2},
	\end{align*}
 where in the last inequality we used the Cauchy-Schwarz inequality and (i) in order to get rid of $v$.
	Using once again (i) we may conclude that the above remainder converges to $0$ for $N$ and $M$ going to $\infty$, thus $f_{N,M}$ converges in norm, and its strong limit must equal $f$.
\end{proof}

\subsection{Infinite matrices associated with  operators in a magnetic Gabor frame.}

Suppose that we have  a continuous operator $T:\mathscr{S}(\X)\rightarrow \mathscr{S}^\prime(\X)$. Notice that any  $T\in\mathcal L \big(L^2(\X)\big)$ and  the closure of any symmetric operator on $\mathscr{S}(\X)$ { are examples of such operators}. Given {the magnetic Gabor frame} \eqref{DF-G-frame} we can associate with   it the following \say{infinite matrix}:
\beq\label{dc4}
\mathbb{M}^A[T]_{\talpha,\tbeta}\,:=\,\big\langle T\mathcal{G}^A_{\tbeta}\,,\,\overline{\mathcal{G}^A_{\talpha}}\big\rangle_{{\mathscr{S}', \mathscr{S}}}.
\eeq
Frequently we shall write the above matrix elements using Formula \eqref{F-dual-pscal}.

\begin{notation}
	We shall denote by $\mathscr{M}_\Lambda$ the complex linear space of infinite matrices with complex entries indexed by a regular lattice $\Lambda$.  We shall work with the lattices $\Gamma, \Gamma_*$ and $\tGamma=\Gamma\times\Gamma_*$.
\end{notation}
 Given any linear operator $T:\mathscr{S}(\X)\rightarrow L^2(\X)$, one may consider it as an operator in the Hilbert space $L^2(\X)$ with domain $\mathscr {S}(\X)$ and define its adjoint $T^*:\mathcal{D}(T^*)\rightarrow L^2(\X)$ putting:$$\mathcal{D}(T^*):=\big\{v\in L^2(\X)\,,\,\big|\big(v,T\varphi\big)_{L^2(\X)}\big|\leq C(T,v)\|\varphi\|_{L^2(\X)}\,,\, \forall\varphi\in\mathscr{S}(\X)\big\}$$ and $\big(T^*v,\varphi\big)_{L^2(\X)}:=\big(v,T\varphi\big)_{L^2(\X)}$ for any $\varphi\in\mathscr{S}(\X)$.

\begin{proposition}\label{P-GF-Op-dec}
	{Let } $T:\mathscr{S}(\X)\rightarrow L^2(\X)$ be as above and assume that  $\mathscr{S}(\X)\subset\mathcal{D}(T^*)$. Then  for any $\varphi\in \mathscr{S}(\X)$, we have   the identity in $L^2(\X)$:
	\beq\label{dc5}
	T\varphi\,=\,\underset{\talpha\in\tGamma}{\sum}\,\left(\underset{\tbeta\in\tGamma}{\sum}\,\mathbb{M}^A[T]_{\talpha,\tbeta}\Big(\mathcal{G}^A_{\tbeta}\,,\,\varphi\Big)_{L^2(\X)}\right)\,\mathcal{G}^A_{\talpha}\,,
	\eeq
	{ where} the series  indexed by $\tGamma$ converge in the norms of  $\ell^2(\tGamma)$ and $L^2(\X)$ resp.\,. { Also, if $\psi, \varphi\in \mathscr{S}(\X)$ then
 \beq\label{dc5'}
	\big ( \psi,T\varphi\big )_{L^2(\X)}\,=\,\underset{\talpha\in\tGamma}{\sum}\,\Big(\psi\, ,\, \mathcal{G}^A_{\talpha}\Big)_{L^2(\X)}\left(\underset{\tbeta\in\tGamma}{\sum}\,\mathbb{M}^A[T]_{\talpha,\tbeta}\Big(\mathcal{G}^A_{\tbeta}\,,\,\varphi\Big)_{L^2(\X)}\right)\,.
	\eeq
 
 }
\end{proposition}
\begin{proof}
	We use Proposition \ref{P-Parseval-frame} for $T\varphi\in L^2(\X)$ and write:
	\begin{align*}
		T\varphi\,&=\,\underset{\talpha\in\tGamma}{\sum}\,\Big(\mathcal{G}^A_{\talpha}\,,\,T\varphi\Big)_{L^2(\X)}\,\mathcal{G}^A_{\talpha}\,=\,\underset{\talpha\in\tGamma}{\sum}\,\Big(T^*\mathcal{G}^A_{\talpha}\,,\,\varphi\Big)_{L^2(\X)}\,\mathcal{G}^A_{\talpha}
	\end{align*}
with the series converging in $L^2(\X)$. Using once again  Proposition \ref{P-Parseval-frame}  for $\varphi\in\mathscr{S}(\X)$ we obtain that
\begin{align*}
\Big(T^*\mathcal{G}^A_{\talpha},\varphi\Big)_{L^2(\X)}=\underset{\tbeta\in\tGamma}{\sum}\,\big(T^*\mathcal{G}^A_{\talpha},\big(\mathcal{G}^A_{\tbeta},\varphi\big)_{L^2(\X)}\mathcal{G}^A_{\tbeta}\big)_{L^2(\X)}=\underset{\tbeta\in\tGamma}{\sum}\,\big(\mathcal{G}^A_{\talpha},T\mathcal{G}^A_{\tbeta}\big)_{L^2(\X)}\,\big(\mathcal{G}^A_{\tbeta},\varphi\big)_{L^2(\X)}
\end{align*}
with the series converging in the norm of  $\ell^2(\tGamma)$. { This proves \eqref{dc5}. Concerning \eqref{dc5'}, we use again Proposition \ref{P-Parseval-frame} and write 

\begin{align*}
	\big ( \psi,T\varphi\big )_{L^2(\X)}\,&=\,
 \,\underset{\talpha\in\tGamma}{\sum}\,\Big(\psi\, ,\, \mathcal{G}^A_{\talpha}\Big)_{L^2(\X)}\big(T^*\mathcal{G}^A_{\talpha}\, ,\,\varphi\big)_{L^2(\X)}\\
 &=\, 
 \underset{\talpha\in\tGamma}{\sum}\,\Big(\psi\, ,\, \mathcal{G}^A_{\talpha}\Big)_{L^2(\X)}\,
 \underset{\tbeta\in\tGamma}{\sum}\,\mathbb{M}^A[T]_{\talpha,\tbeta}\Big(\mathcal{G}^A_{\tbeta}\,,\,\varphi\Big)_{L^2(\X)}\,.
	\end{align*}

}
\end{proof}

\section{The matrix form of the magnetic Weyl calculus in a magnetic Gabor frame.} 

\subsection{Brief reminder of the magnetic Weyl calculus.}

Let us recall the magnetic Weyl quantization \cite{MP-1,IMP-1,IMP-19}. Given $\Phi \in \mathscr{S}(\Xi)$, we define its quantization  $ \Op^A(\Phi)\in\mathcal{L}\big(\mathscr{S}(\X);\mathscr{S}(\X)\big),$ by
\begin{equation*}
\big(\Op^A(\Phi)\varphi\big)(x):={ (2\pi)^{-d}}\int_{\X^*}d\xi\int_{\X}dy\,\Lambda^A(x,y)\,\,e^{i<\xi,x-y>}\,\Phi\big((x+y)/2,\xi\big)\,\varphi(y), \forall\varphi\in\mathscr{S}(\X)
\end{equation*}
where we used the notation introduced in \eqref{DF-LambdaA}.
 It has been proven in \cite{MP-1} (Proposition 3.5) that this \say{quantization} $\Phi \mapsto \Op^A(\Phi)$ may be extended to the following isomorphism of topological linear spaces $\Op^A$ from $\mathscr{S}^\prime(\Xi)$ onto $\mathcal{L}\big(\mathscr{S}(\X);\mathscr{S}^\prime(\X)\big)$  defined by 
\begin{align*}
\langle\Op^A(\Phi)\phi,\psi\rangle_{\mathscr S',\mathscr S}={ (2\pi)^{-d}}{ \int_{\X^*}d\xi\int_{\X}dx}\int_{\X}dy\,\Lambda^A(x,y)\,\,e^{i<\xi,x-y>}\Phi\big(\frac{x+y}{2},\xi\big)\phi(y)\psi(x),\\
\forall(\phi,\psi)\in\mathscr{S}(\X)\times\mathscr{S}(\X).
\end{align*}

We shall call $F\in\mathscr{S}^\prime(\Xi)$ \textit{the distribution symbol of} $\Op^A(F)\in\mathcal{L}\big(\mathscr{S}(\X);\mathscr{S}^\prime(\X)\big)$. \\
Later on we shall work more particularly  with the H\"{o}rmander classes of symbols indexed by $p\in\mathbb{R}$:
\beq\begin{split}\label{D-Hsymb}
S_0^p(\X\times\X^*)\,:&=\,\big\{F\in C^\infty(\Xi)\mbox{ s.t. } \nu^{p,\rho}_{n,m}(F)<\infty\,,\,\forall(n,m)\in\mathbb{N}\times\mathbb{N}\big\},\\
\text{where:}&\ \nu^{p}_{n,m}(F):=\underset{|\alpha|\leq n}{\max}\,\underset{|\beta|\leq m}{\max}\,\underset{(x,\xi)\in\Xi}{\sup}<\xi>^{-p}\big|\big(\partial_x^\alpha\partial_\xi^\beta F\big)(x,\xi)\big|.
\end{split}\eeq

\subsection{ The main results}

Let us come back to our Gabor frame $\big\{\mathcal{G}^{A}_{\gamma,\gamma^*}\big\}_{(\gamma,\gamma^*)\in\Gamma\times\Gamma_*}$ and compute the associated matrix for an operator of the form $\Op^A(\Phi)$ for some $\Phi\in\mathscr{S}^\prime(\Xi)$  (using \eqref{F-dual-pscal}):
\begin{align}\label{F-MOpAPhi}
&\mathbb{M}^A[\Op^A(\Phi)]_{\talpha,\tbeta}:=\Big(\mathcal{G}^A_{\talpha}\,,\,\Op^A(\Phi)\mathcal{G}^A_{\tbeta}\Big)_{L^2(\X)}={(2\pi)^{-2d}}\int_{\X}dx\int_{\X}dy\int_{\X^*}d\eta\, \Lambda^A(\alpha,x)\,\times\\\nonumber
&\times\,e^{-\frac{i}{2\pi} <\alpha^*,x-\alpha>}\mathcal{g}(x-\alpha)\,\Lambda^A(x,y)\,e^{i<\eta,x-y>}\,\Phi\big((x+y)/2,\eta\big)\,\Lambda^A(y,\beta)\,e^{\frac{i}{2\pi}<\beta^*,y-\beta>}\mathcal{g}(y-\beta).
\end{align}
Let us denote by $<x,y,z>:=\big\{u:=x+t(y-x)+st(z-y)\in\X\,,\,(t,s)\in[0,1]\times[0,1]\big\}$ the triangle with vertices $\{x,y,z\}$ and by
$$
\Omega^B(x,y,z)\,:=\,e^{-i\int_{<x,y,z>}B}.
$$
Then we notice that Stokes' formula implies the identity:
\beq\label{F-Stokes}
\Lambda^A(\alpha,x)\Lambda^A(x,y)\Lambda^A(y,\beta)=\Lambda^A(\alpha,\beta)\Omega^B(\alpha,x,y)\Omega^B(\alpha,y,\beta)
\eeq
and for our class of magnetic fields introduced in Subsection \ref{ss1.3} we also have the following estimates:
\beq\begin{split}\label{E-OmegaB}
{ \big|\partial_x^a\partial_y^b\partial_z^c\Omega^B(x,y,z)\big|\,\leq\,C_{ab}(B)\, \big (1+\text{diameter}(<x,y,z>)+\text{area}(<x,y,z>)\big )^{|a+b+c|}.}
\end{split}
\eeq

Here is the main  result of the paper.

\begin{theorem}\label{T-m-Gabor-frame}   Given some $p\in \mathbb R$ and some   
 $\Phi \in  \mathscr{S}^\prime(\Xi)$, the following two statements
 are equivalent: 

\noindent {\rm (i)}	$\Phi$  belongs to $S^p_0(\X\times \X^*)$.

\noindent {\rm (ii)} For any $(n_1,n_2)\in\mathbb{N}^2$ there exists some constant $C_{n_1,n_2}(\Phi,B)>0$ such that the $\tGamma$-indexed matrix of $\Op^A(\Phi)$ in the magnetic Gabor frame $\big\{\mathcal{G}^A_{\tgamma}\big\}_{\tgamma\in\tGamma}$ has the following behavior:
	\beq\label{F-Mdecay}
	\underset{(\talpha,\tbeta)\in\tGamma^2}{\sup}<\alpha-\beta>^{n_1}<\alpha^*-\beta^*>^{n_2}<\alpha^*+\beta^*>^{-p}\,\big|\mathbb{M}^A[\Op^A(\Phi)]_{\talpha,\tbeta}\big|\,\leq\,C_{n_1,n_2}(\Phi,B).
	\eeq
\end{theorem}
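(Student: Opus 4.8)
The plan is to establish the two implications separately. The forward direction (i)$\Rightarrow$(ii) will follow from direct oscillatory--integral estimates on the explicit expression \eqref{F-MOpAPhi}, while the converse (ii)$\Rightarrow$(i) will be obtained by reconstructing $\Phi$ from its matrix through Proposition \ref{P-GF-Op-dec} and controlling the magnetic Weyl symbols of the rank--one building blocks. For (i)$\Rightarrow$(ii) I would first insert the Stokes identity \eqref{F-Stokes} into \eqref{F-MOpAPhi}, factoring out the unimodular $\Lambda^A(\alpha,\beta)$ (which does not affect the modulus) and replacing the three line--integral phases by the two curvature factors $\Omega^B(\alpha,x,y)$ and $\Omega^B(\alpha,y,\beta)$, whose derivatives are controlled by \eqref{E-OmegaB}. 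Recentering with $x=\alpha+u$ and $y=\beta+v$ confines $u,v$ to $\supp\mathcal{g}\subset(-1,1)^d$; the only surviving long phase is $e^{i<\eta,w>}$ with $w:=(\alpha-\beta)+(u-v)$, and since $u,v$ are bounded one has $<w>\sim<\alpha-\beta>$. The $u$-- and $v$--phases collapse to $e^{i<\zeta_1,u>}$ and $e^{-i<\zeta_2,v>}$ with $\zeta_1:=\eta-\alpha^*/(2\pi)$ and $\zeta_2:=\eta-\beta^*/(2\pi)$, where crucially $\zeta_1-\zeta_2=(\beta^*-\alpha^*)/(2\pi)$ is independent of $\eta$.

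The analytic heart of this direction is that the $\eta$--integral is only oscillatory and not absolutely convergent, since an $S^p_0$ symbol gains no decay under $\eta$--differentiation; I would therefore regularise it by two independent integrations by parts. Using the operator $L_\eta:=(1+|w|^2)^{-1}\big(1-i<w,\nabla_\eta>\big)$, which fixes $e^{i<\eta,w>}$, and transposing it $N$ times, one gains a factor $<w>^{-N}$ while the $\eta$--amplitude stays of order $<\eta>^p$ (as $\eta$--derivatives preserve the order). Then $M$ integrations by parts in each of $u$ and $v$ produce the factors $<\zeta_1>^{-M}$ and $<\zeta_2>^{-M}$; the derivatives landing on the $\Omega^B$ factors cost at most $<w>^{2M}$ in total by \eqref{E-OmegaB}, so the choice $N=n_1+2M$ leaves a clean $<w>^{-n_1}\sim<\alpha-\beta>^{-n_1}$. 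It then remains to bound $\int d\eta\,<\eta>^p<\zeta_1>^{-M}<\zeta_2>^{-M}$: applying the Peetre inequality to $<2\eta>=<\zeta_1+\zeta_2+(\alpha^*+\beta^*)/(2\pi)>$ in the direction dictated by the sign of $p$ extracts the factor $<\alpha^*+\beta^*>^p$, and then using $\zeta_1-\zeta_2=(\beta^*-\alpha^*)/(2\pi)$ together with the standard estimate $\int d\eta\,<\eta-a>^{-s}<\eta-b>^{-s}\le C_s<a-b>^{-s}$ (for $s$ large) extracts $<\alpha^*-\beta^*>^{-n_2}$. Since the residual $u,v$--integrals run over a compact set, assembling the three factors yields exactly \eqref{F-Mdecay}.

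For the converse (ii)$\Rightarrow$(i) I would exploit that $\Op^A$ is an isomorphism onto $\mathcal{L}\big(\mathscr{S}(\X);\mathscr{S}'(\X)\big)$ and that the Parseval frame reconstructs operators (Proposition \ref{P-GF-Op-dec}): setting $\Phi_{\talpha,\tbeta}:=(\Op^A)^{-1}\big(\,|\mathcal{G}^A_\talpha\rangle\langle\mathcal{G}^A_\tbeta|\,\big)$ for the magnetic Wigner symbol of the rank--one operator, one has formally $\Phi=\sum_{\talpha,\tbeta}\mathbb{M}^A[\Op^A(\Phi)]_{\talpha,\tbeta}\,\Phi_{\talpha,\tbeta}$. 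The substantive work is to prove uniform bounds showing that each $\Phi_{\talpha,\tbeta}$ is localised in position near $(\alpha+\beta)/2$ and in momentum near $(\alpha^*+\beta^*)/(4\pi)$, with Schwartz decay away from this point and with all seminorms controlled by fixed powers of $<\alpha-\beta>$ and $<\alpha^*-\beta^*>$. For fixed $(z,\xi)$ only the indices with $(\alpha+\beta)/2$ close to $z$ and $(\alpha^*+\beta^*)/(4\pi)$ close to $\xi$ contribute appreciably, so the decay \eqref{F-Mdecay} with sufficiently many derivatives $n_1,n_2$ renders the series for $\partial_z^a\partial_\xi^b\Phi(z,\xi)$ absolutely convergent and bounded by $C<\xi>^p$, the order coming from the factor $<\alpha^*+\beta^*>^p\sim<\xi>^p$; this is precisely the statement that $\Phi\in S^p_0(\X^*,\X)$.

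I expect the main obstacle to be the bookkeeping in the forward direction. Because the $\eta$--integral is purely oscillatory, the weights $<w>$, $<\zeta_1>$ and $<\zeta_2>$ cannot be estimated in isolation: they are entangled through $w=(\alpha-\beta)+(u-v)$ and through the growth in \eqref{E-OmegaB}, so one must orchestrate the two integrations by parts (in $\eta$ and in $(u,v)$) so that all three target powers in \eqref{F-Mdecay} emerge simultaneously, the delicate point being the Peetre step that converts $<\zeta_1>,<\zeta_2>$ into $<\alpha^*-\beta^*>$ and $<\alpha^*+\beta^*>$. In the converse direction the corresponding difficulty is the uniform--in--$(\talpha,\tbeta)$ localisation of the rank--one symbols $\Phi_{\talpha,\tbeta}$ and the resulting absolute summability of the reconstruction series.
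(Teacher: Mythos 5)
Your overall architecture coincides with the paper's: for (i)$\Rightarrow$(ii) you estimate the explicit triple integral \eqref{F-MOpAPhi} after inserting the Stokes identity \eqref{F-Stokes} and regularising by repeated integrations by parts, and for (ii)$\Rightarrow$(i) you reconstruct $\Phi$ as the series $\sum\mathbb{M}^A[\Op^A(\Phi)]_{\talpha,\tbeta}\,\Phi_{\talpha,\tbeta}$ of magnetic Wigner symbols of the rank--one kernels $\mathcal{G}^A_{\talpha}\otimes\overline{\mathcal{G}^A_{\tbeta}}$ and exploit their localisation near $\big((\alpha+\beta)/2,(\alpha^*+\beta^*)/(4\pi)\big)$ together with the finiteness of the set of contributing $\alpha+\beta$ for fixed $z$ --- the converse is, step for step, the paper's argument. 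The one genuine divergence is in the forward direction: the paper passes to sum/difference coordinates $z=(x+y-\alpha-\beta)/2$, $v=x-y-\alpha+\beta$ and sum/difference indices $(\mu,\nu)=(\alpha+\beta,\alpha-\beta)$, and extracts the $<\alpha^*-\beta^*>^{-n_2}$ decay by integrating by parts in the centre--of--mass variable $z$ against the phase $e^{\frac{i}{2\pi}<\nu^*,z>}$, whereas you recentre each variable at its own lattice point ($x=\alpha+u$, $y=\beta+v$) and obtain that decay from the separation of the two momentum peaks via $\int_{\X^*}d\eta\,<\eta-\alpha^*/(2\pi)>^{-M}<\eta-\beta^*/(2\pi)>^{-M}\,\lesssim\,<\alpha^*-\beta^*>^{d-M}$. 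Both mechanisms are sound; yours avoids the parity--constrained index lattices $[\Gamma]^2_\circ$ and the centre--of--mass bookkeeping at the price of the two--peak convolution lemma, while the paper's derives all three decay factors uniformly from integrations by parts against explicit phases. Two points to tidy when writing yours out: (a) as stated you count the phase $e^{i<\eta,u-v>}$ twice, once inside $e^{i<\eta,w>}$ with $w=(\alpha-\beta)+(u-v)$ and once inside $e^{i<\zeta_1,u>}e^{-i<\zeta_2,v>}$ with $\zeta_j$ depending on $\eta$; you must regroup the total phase between the $\eta$--step and the $(u,v)$--step, after which the $(u,v)$--derivatives also fall on the factor $(1+|w|^2)^{-N}$ produced by the first step (harmlessly, since this only improves the bound); (b) the convolution estimate loses a dimensional power, so take $M\geq n_2+|p|+d$ rather than $M\geq n_2+|p|$.
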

\noindent {\it Proof that (i) implies (ii)}.\\
  Let us assume that (i) holds. Let us start from \eqref{F-MOpAPhi} and make the following change of variables:
	$$\X\times\X\ni (x,y)\mapsto (z,v):=\big((x+y-\alpha-\beta)/2,x-y-\alpha+\beta\big)\in\X\times\X$$
	and also the following similar bijective change of indices for the $\tGamma\times\tGamma$-indexed series:
	\beq\begin{array}{lcl}\label{F-chvar-2-b}
		\Gamma\times\Gamma\ni(\alpha,\beta)&\mapsto&(\mu,\nu):=\big(\alpha+\beta,\alpha-\beta\big)\in[\Gamma]^2_\circ,\\
		\Gamma_*\times\Gamma_*\ni(\alpha^*,\beta^*)&\mapsto&(\mu^*,\nu^*):=\big(\alpha^*+\beta^*,\beta^*-\alpha^*\big)\in[\Gamma^*]^2_\circ,
	\end{array}\eeq
	where $$[\Gamma]^2_\circ:=\{(\mu,\nu)\in\Gamma\times\Gamma\,,\,\text{\it  where $\mu_j$ and $\nu_j$ for $1\leq j\leq d$ are simultaneously even or odd}\}$$ and similarly for $\Gamma_*$. With these new indices we consider the corresponding matrix $$\widetilde{\mathbb{M}}[\Op^A(\Phi)]_{\tmu(\talpha,\tbeta),\tnu(\talpha,\tbeta)}:=\mathbb{M}[\Op^A(\Phi)]_{\talpha,\tbeta}.$$ Using \eqref{F-Stokes} and {  making the change $\zeta=\eta-\mu^*/(4\pi)$} we obtain that:
	\begin{align*} \numberthis\label{F-tM-OpAPhi}
		&\widetilde{\mathbb{M}}[\Op^A(\Phi)]_{\tmu,\tnu}\\
   &\quad ={ (2\pi)^{-2d}}\Lambda^A\big((\mu+\nu)/2,(\mu-\nu)/2\big)e^{\frac{i}{4\pi}<\mu^*,\nu>}\int_{\X}dz\,e^{\frac{i}{2\pi}<\nu^*,z>}\,\int_{\X^*}d\zeta\,e^{i<\zeta,\nu>}\,\times\\
		&\qquad\times\int_{\X}dv\,e^{i<\zeta,v>}\,\mathcal{g}(z+v/2)\mathcal{g}(z-v/2)\,\Theta^B_{\mu,\nu}(z,v)\,\Phi\big(z+{ \mu/2},\zeta+\mu^*/(4\pi)\big),
	\end{align*}
where
$$
\Theta^B_{\mu,\nu}(z,v):=\Omega^B(\alpha(\mu,\nu),x(z,v),y(z,v))\,\Omega^B(\alpha(\mu,\nu),y(z,v),\beta(\mu,\nu)).
$$

Let us fix some arbitrary $(n_1,n_2)\in\mathbb{N}^2$ and rewrite \eqref{F-tM-OpAPhi} (for any $(m_1,m_2,m_3)\in\mathbb{N}^3$):
	\begin{align*}\numberthis\label{F-est-tM-OpAPhi}
		&\Big|\widetilde{\mathbb{M}}^A[\Op^A(\Phi)]_{\tmu,\tnu}\Big|\leq (2\pi)^{-2d}\times \\
		&  \Big|\int_{\X^*}d\zeta\,\int_{\X\times \X}dz\,dv\,\Big[\Big(\frac{1-\Delta_z}{<\nu^*>^2}\Big)^{m_1}\hspace*{-8pt}e^{\frac{i}{2\pi}<\nu^*,z>}\Big]\Big[\Big(\frac{1-\Delta_\zeta}{<\nu>^2}\Big )^{m_2}e^{i<\zeta,\nu>}\Big]\,\Big [\Big(\frac{1-\Delta_v}{<\zeta>^2}\Big )^{m_3} e^{i<\zeta,v>}\Big]\\
		&\qquad\qquad \qquad \qquad \qquad\qquad  \times\,\mathcal{g}(z+v/2)\mathcal{g}(z-v/2)\,\Theta^B_{\mu,\nu}(z,v)\,\Phi\big(z+{ \mu/2},\zeta+{ \mu^*/(4\pi)}\big)\Big|.
	\end{align*}
	Since the support of $\mathcal{g}$ is included in $(-1,1)^d$, we may assume $|z_j\pm v_j/2|\leq 1$ for all $1\leq j\leq d$. Thus 
	$$2|z_j|= |z_j-v_j/2+z_j+v_j/2|\leq 2,\quad |v_j|= |z_j+v_j/2-(z_j-v_j/2)|\leq 2,$$
	 and  the integral with respect to $z$ and $v$ in \eqref{F-est-tM-OpAPhi}  is restricted to $|z_j|\leq 1$ and $|v_j|\leq 2$ for all $1\leq j\leq d$. Using the bounds in \eqref{E-OmegaB} and restricting to $|z_j|\leq 1,|v_j|\leq 2$ implies:
	\beq\label{dc6}\begin{split}
	\big|\partial_z^a\partial_v^b\Theta^B_{\mu,\nu}(z,v)\big|\,&\leq\,C_{ab}(B)<z+v/2>^{|a+b|}<v>^{|a+b|}<\nu>^{|a+b|}<z-v/2>^{|a+b|}\\
	&\leq\,C'_{a,b}(B)<\nu>^{|a+b|}.
\end{split}\eeq
 
Integrating by parts with respect to $z$ we obtain a decay in $\nu^*$ of the type $<\nu^*>^{-2m_1}$, at the price of up to $2m_1$ derivatives acting on $\Theta^B_{\mu,\nu}(z,v)$, which produce a growth like $<\nu>^{2m_1}$. Then integrating by parts with respect to $v$ we produce a decaying factor $<\zeta>^{-2m_3}$ at a price of up to $2m_3$ derivatives acting on $\Theta^B_{\mu,\nu}(z,v)$, which produce another growth like $<\nu>^{2m_3}$. Finally, integrating by parts with respect to $\zeta$ will produce a decay $<\nu>^{-2m_2}$ together with some powers of $v$ (they are bounded on the domain of integration), while the decay $<\zeta>^{-2m_3}$ is not affected (it can only be improved). Hence \eqref{F-est-tM-OpAPhi} reads as:
\begin{align*}
   \Big|\tilde{\mathbb{M}}^A[\Op^A(\Phi)]_{\tmu,\tnu}\Big|\leq C <\nu^*>^{-2m_1}<\nu>^{2m_1+2m_3-2m_2} \int_{\X^*} d\zeta  <\zeta>^{-2m_3}\,  { <\zeta+\mu^*/(4\pi)>^p}.
\end{align*}
Now we choose $2m_1\geq n_1$, $2m_3>|p|+d+1$ and $2m_2>2m_1+2m_3+n_2$. The decay in $\nu^*$ and $\nu$ holds,  the function $<\zeta>^{-d-1}$ is integrable, hence we only need to check that the quantity
\begin{equation}\label{dhc5}
<\mu^*/(4\pi)>^{-p}\, <\zeta>^{-|p|}\, { <\zeta+\mu^*/(4\pi)>^p}
\end{equation}
is uniformly bounded in $\zeta$ and $\mu^*$ for any $p\in\R$. 

{ Let us show that for every $s\in \R$ we have 
\begin{equation}\label{dhc2}
\begin{split}
&<x+y>^s\, \leq \, {2^{|s|/2}}<x>^s\; <y>^{|s|}, \quad \text{or equivalently}\\
&<x+y>^s\, <x>^{-s}\; <y>^{-|s|}\leq {2^{|s|/2}}.
\end{split}
\end{equation}
When $s>0$ we can reduce it to $1+|x+y|^2\leq 2(1+|x|^2)(1+|y|^2)$, while when $s<0$ we use that $s=-|s|$ and 
$$<x>^{|s|}=<(x+y)+(-y)>^{|s|}\leq 2^{|s|/2}<x+y>^{|s|}\, <y>^{|s|}.$$
Now use $s=p$, $x=-\mu^*/(4\pi)$ and $y=\zeta +\mu^*/(4\pi)$ in \eqref{dhc2} and we get that the quantity in \eqref{dhc5} is bounded by $2^{|p|/2}$. 
}
This ends the proof of (ii). 

\vspace{0.5cm}

\noindent {\it Proof that (ii) implies (i)}.\\ Suppose that $\Phi\in\mathscr{S}^\prime(\Xi)$ is such that \eqref{F-Mdecay} is valid.
	Then $\Op^A(\Phi)\in\mathcal{L}\big(\mathscr{S}(\X);\mathscr{S}^\prime(\X)\big)$ and has a distribution kernel given by:
	$$
	\mathfrak{K}^A[\Phi]\,=\,(2\pi)^{-d/2}\Lambda^A\,\big[\big(\Upsilon^*\circ(\bb1_{\X}\otimes\mathcal{F}_{\X^*}^*)\big)\Phi\big]
	$$
	where $\Upsilon^*:\mathscr{S}^\prime(\X\times\X)\rightarrow\mathscr{S}^\prime(\X\times\X)$ is the extension to tempered distributions of the change of variables map $\X\times\X\ni(x,y)\mapsto\big((x+y)/2,(x-y)\big)\in\X\times\X$. Using the magnetic Gabor frame and Proposition \ref{P-GF-Op-dec} we can write:
	$$
	\mathfrak{K}^A[\Phi]\,=\,\underset{(\talpha,\tbeta)\in\tGamma\times\tGamma}{\sum}\mathbb{M}^A[\Op^A(\Phi)]_{\talpha,\tbeta}\Big(\mathcal{G}^A_{\talpha}\otimes\overline{\mathcal{G}^A_{\tbeta}}\Big)
	$$
	where each term belongs to $\mathscr{S}(\X\times\X)$ and the series converges in the weak { distributional} sense. Thus, we shall approximate the distribution kernel $\mathfrak{K}^A[\Phi]$, in the weak tempered distribution topology,   by a family $\mathfrak{K}^A_N[\Phi]$  indexed by $N\in \mathbb N$ of integral kernels of class $\mathscr{S}(\X\times\X)$ defined by:
	$$
	\mathfrak{K}^A_N[\Phi]\,:=\,\underset{|\talpha|\leq N,|\tbeta|\leq N}{\sum}\mathbb{M}^A[\Op^A(\Phi)]_{\talpha,\tbeta}\Big(\mathcal{G}^A_{\talpha}\otimes\overline{\mathcal{G}^A_{\tbeta}}\Big)\,.
	$$
 The symbols associated to the distribution kernels $\mathfrak{K}^A_N[\Phi]$,
 denoted by $\Phi_N\in\mathscr{S}(\Xi)$, { are} defined by:
	\begin{align*}
		\Phi_N(z,\zeta):&=\int_{\X}dv\,e^{-i<\zeta,v>}\,{ \Lambda^A\big(z-v/2,z+v/2\big) }\,\mathfrak{K}^A_N[\Phi]\big(z+v/2,z-v/2\big)\\ \nonumber
		&=\,\underset{|\talpha|\leq N,|\tbeta|\leq N}{\sum}\mathbb{M}^A[\Op^A(\Phi)]_{\talpha,\tbeta}\,\times \\ \nonumber
		&\quad \times\,\int_{\X}dv\,e^{-i<\zeta,v>}\,\Lambda^A\big(z-v/2,z+v/2\big)\,\mathcal{G}^A_{\talpha}\big(z+v/2\big)\, \overline{\mathcal{G}^A_{\tbeta}\big(z-v/2\big)}\,.
	\end{align*}
	Let us compute:
	\begin{align}\label{dc7}
		&\int_{\X}dv\,e^{-i<\zeta,v>}\,\Lambda^A\big(z-v/2,z+v/2\big)\,\mathcal{G}^A_{\talpha}\big(z+v/2\big)\,\overline{\mathcal{G}^A_{\tbeta}\big(z-v/2\big)}\\
		& =(2\pi)^{-d}\int_{\X}dv\,e^{-i<\zeta,v>}\,\Lambda^A\big(z-v/2,z+{ v/2}\big)\Lambda^A\big(z+v/2,\alpha\big)\Lambda^A\big(\beta,z-v/2\big)\,\times \nonumber \\
		&\quad\times\,e^{\frac{i}{2\pi}<\alpha^*,z+v/2-\alpha>}\,e^{-\frac{i}{2\pi}<\beta^*,z-v/2-\beta>}\,\g\big(z+v/2-\alpha\big)\,\g\big(z-v/2-\beta\big).\nonumber
	\end{align}
	We make the change of variables:
	$$
	\left\{\begin{array}{l}
		\X\ni z\mapsto z':=z-(\alpha+\beta)/2\in\X,\\
		\X\ni v\mapsto v':=v-(\alpha-\beta)\in\X,
	\end{array}\right.
	$$
	and 
	$$
	\left\{\begin{array}{l}
		\Gamma\times\Gamma\ni (\alpha,\beta)\mapsto(\mu,\nu):=\big(\alpha+\beta,\alpha-\beta\big)\in\Gamma\times\Gamma,\\
		\Gamma_*\times\Gamma_*\ni
		(\alpha^*,\beta^*)\mapsto(\mu^*,\nu^*):=\big(\alpha^*+\beta^*,\alpha^*-\beta^*\big)\in\Gamma_*\times\Gamma_*,
	\end{array}\right.
	$$
	and introduce
	\begin{align*}
\Theta^B(z',v',\mu,\nu)&:=\Lambda^A(\beta,\alpha)\Lambda^A\big(z-v/2,z+v/2\big)\Lambda^A\big(z+v/2,\alpha\big)\Lambda^A\big(\beta,z-v/2\big)\\
&=\Omega^B\big(z'-v'/2+\beta,z'+v'/2+\alpha,\alpha\big)\,\Omega^B\big(z'-v'/2+\beta,\alpha,\beta\big).
	\end{align*}
Then the integral in \eqref{dc7} reads as:
	\begin{align}\label{dc8}
		&(2\pi)^{-d}\Lambda^A(\alpha,\beta)\int_{\X}dv'\,e^{-i<\zeta,v'+\nu>}\,e^{\frac{i}{2\pi}<\nu^*,z'>}\,e^{\frac{i}{4\pi}<\mu^*,v'>}\times \nonumber \\
		&\hspace*{4cm}\times\,\Theta^B(z',v',\mu,\nu)\, \mathcal{g}(z'+v'/2)\, \mathcal{g}(z'-v'/2).
	\end{align}
	On the support of $\mathcal{g}\in C^\infty_0(\X)$ we have $z'\pm v'/2\in (-1,1)^d$. Thus we must have $|v'_j|<2$ and $|z'_j|<1$, where the second inequality implies $$2z_j-2<\mu_j<2z_j+2,\quad \forall \, 1\leq j\leq d.$$  This last condition { implies } that there exists some finite set $\Sigma(z)\subset\Gamma$ with $\#\Sigma(z)\leq 5^d$ such that for $\mu\notin\Sigma(z)$ the above integral vanishes.
	
	From the definition of $\Phi_N$ we see that we need to control multiple series involving  $\mu$, $\nu$, $\nu^*$ and $\mu^*$. We have just seen that there are only finitely many $\mu$'s which contribute, uniformly in $z$ and $\zeta$. The series in $\nu$ and $\nu^*$ will be controlled by using the strong decay of the matrix elements, so we only need to worry about the sum over $\mu^*$. In order to get some decay in $\mu^*$ we have to perform some partial integration. 
	
	Let us consider the \say{image} lattices 
	\[\begin{split}
			&\hat{\Gamma}^2:=\big\{(\mu,\nu)\in\Gamma\times\Gamma\,,\,\big((\mu+\nu)/2,(\mu-\nu)/2\big)\in\Gamma\times\Gamma\big\}\\
			&\hat{\Gamma}^2_N:=\big\{(\mu,\nu)\in\hat{\Gamma}^2\,,\,|(\mu+\nu)/2|\leq N,\,|(\mu-\nu)/2|\leq N\big\},
	\end{split}
	\]
	and similarly $[\widehat{\Gamma_*}]^2$ and $[\widehat{\Gamma_*}]^2_N$. Then we can write:
	\begin{align*}
		&\Phi_N(z,\zeta)=(2\pi)^{-d}
		\underset{\mu\in\Sigma(z),(\mu,\nu)\in\hat{\Gamma}^2_N}{\sum}\hspace*{-24pt}\Lambda^A\big((\mu+\nu)/2,(\mu-\nu)/2\big)\hspace*{-18pt} \underset{(\mu^*,\nu^*)\in[\widehat{\Gamma_*}]^2_N}{\sum}\hspace*{-12pt}\mathbb{M}^A[\Op^A(\Phi)]_{\tmu(\talpha,\tbeta),\tnu(\talpha,\tbeta)}\,\times  \\
		&\times \, \hspace*{-12pt}\int\limits_{\underset{1\leq j\leq d}{\prod}\{|v'_j|\leq2\}}\hspace*{-12pt}dv' e^{-i<\zeta-\mu^*/(4\pi),v'>}\, e^{-i<\zeta,\nu>}\,e^{\frac{i}{2\pi}<\nu^*,z'>}\,\g\big(z'+v'/2\big)\,\g\big(z'-v'/2\big)\,\Theta^B(z',v',\mu,\nu).
	\end{align*}
	
	A crucial observation is that, for any multi-indices $(a,b)\in\mathbb{N}^d\times\mathbb{N}^d$, if we consider $\big(\partial_z^a\partial_\zeta^b\Phi_N\big)(z,\zeta)$,  we can generate powers of $v'$, of $\nu$ and $\nu^*$. On the support of $\mathcal{g}$, the variables $v_j'$ are bounded. Integrating by parts $M\geq d+1+|p|$ times with respect to $v'$ we can make appear a factor of the type 
	$<\zeta -\mu^*/(4\pi)>^{-d-1-|p|},$
	at the price of some extra powers of $\nu$. Due to \eqref{F-Mdecay} we see that the summation over the indices $\nu$ and $\nu^*$ are under control, and we only need to bound the series 
	$$\sum_{\mu^*\in \Gamma_*}\, <\zeta -\mu^*/(4\pi)>^{-d-1}\, <\mu^*/(4\pi)>^p<\zeta -\mu^*/(4\pi)>^{-|p|}.$$

 Using \eqref{dhc2} with $x=-\zeta$ and $y=\zeta -\mu^*/(4\pi)$ we have 
 $$<\mu^*/(4\pi)>^p<\zeta -\mu^*/(4\pi)>^{-|p|}\leq 2^{|p|/2}\, <\zeta>^p,$$
 thus the series with respect to $\mu^*$ can also be bounded by a constant times $<\zeta>^p$, hence we have just proved that for any pair of multi-indices $a,b$ there exists a constant $C_{a,b}$ such that for any $N\geq 1$ we have 
	$$
	\big|\big(\partial_z^a\partial_\zeta^b\Phi_N\big)(z,\zeta)\big|\,\leq\,C_{a,b}\, <\zeta>^p,\quad\forall(z,\zeta)\in\Xi\,.
	$$
	Notice, that the above estimate is uniform for $z$ and $\zeta$ restricted to compact sets, and the symbol $\Phi$ will be the uniform limit of $\Phi_N$ on compact sets when $N\to +\infty$. 
	
\qed

\begin{notation}\label{N-rdec-offd-m}
For $p\in \mathbb R$, we  denote by $\mathscr{M}^p_{\tGamma,\infty}$ the complex linear space of  infinite matrices indexed by the lattice $\tGamma$ and verifying the estimate \eqref{F-Mdecay}. We say that they have \textit{rapid off-diagonal decay}.
\end{notation}

\begin{proposition}\label{P-comp-rdec-offd-m}
Let $p,q\in \mathbb R$, and let $(\M,\M^\prime)\in\mathscr{M}^p_{\tGamma,\infty}\times\mathscr{M}^q_{\tGamma,\infty}$. Then their matrix product is an element of $\mathscr{M}^{p+q}_{\tGamma,\infty}$.
\end{proposition}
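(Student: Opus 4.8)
The plan is to estimate the entries of the product matrix directly from the rapid off-diagonal decay bounds \eqref{F-Mdecay} that $\M$ and $\M^\prime$ enjoy. Writing the matrix product as a double sum over the intermediate index $\tgamma=(\gamma,\gamma^*)\in\Gamma\times\Gamma_*$,
\[
(\M\M^\prime)_{\talpha,\tbeta}=\underset{\tgamma\in\tGamma}{\sum}\M_{\talpha,\tgamma}\,\M^\prime_{\tgamma,\tbeta}=\underset{\gamma\in\Gamma}{\sum}\;\underset{\gamma^*\in\Gamma_*}{\sum}\M_{(\alpha,\alpha^*),(\gamma,\gamma^*)}\,\M^\prime_{(\gamma,\gamma^*),(\beta,\beta^*)},
\]
I would fix an arbitrary target pair $(n_1,n_2)\in\mathbb{N}^2$ and bound each factor by \eqref{F-Mdecay}, using decay orders $N_1,N_2$ for $\M$ and $N_1^\prime,N_2^\prime$ for $\M^\prime$ that are left free and fixed only at the very end. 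This reduces the claim to a summability estimate over $\Gamma\times\Gamma_*$ for a product of "difference" weights multiplied by the two "sum" growth weights $<\alpha^*+\gamma^*>^p$ and $<\gamma^*+\beta^*>^q$.

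The position sum over $\gamma$ decouples and is the routine part, since there are no growth factors in the position variables. Here the sub-multiplicativity $<\alpha-\beta>\leq\sqrt{2}\,<\alpha-\gamma><\gamma-\beta>$ lets me extract the target factor $<\alpha-\beta>^{-n_1}$, leaving $\underset{\gamma}{\sum}<\alpha-\gamma>^{-(N_1-n_1)}<\gamma-\beta>^{-(N_1^\prime-n_1)}$; choosing $N_1>n_1+d$ and $N_1^\prime\geq n_1$ makes this uniformly bounded (use the first weight for convergence and bound the second by $1$).

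The momentum sum over $\gamma^*$ carries the real content, because of the two sum-weights. The plan is to invoke Peetre's inequality $<x+y>^s\leq 2^{|s|/2}<x>^s<y>^{|s|}$ (valid for all $s\in\R$) in the two forms
\[
<\alpha^*+\gamma^*>^p\leq 2^{|p|/2}<\alpha^*+\beta^*>^p<\gamma^*-\beta^*>^{|p|},\qquad <\gamma^*+\beta^*>^q\leq 2^{|q|/2}<\alpha^*+\beta^*>^q<\gamma^*-\alpha^*>^{|q|},
\]
whose product yields precisely the required growth weight $<\alpha^*+\beta^*>^{p+q}$ at the cost of the two difference-weights $<\gamma^*-\beta^*>^{|p|}$ and $<\gamma^*-\alpha^*>^{|q|}$. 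Combining these correction factors with the target weight extracted via $<\alpha^*-\beta^*>\leq\sqrt{2}\,<\alpha^*-\gamma^*><\gamma^*-\beta^*>$ only lowers the available decay orders, and the surviving sum $\underset{\gamma^*}{\sum}<\alpha^*-\gamma^*>^{-(N_2-n_2-|q|)}<\gamma^*-\beta^*>^{-(N_2^\prime-n_2-|p|)}$ is uniformly bounded once $N_2>n_2+|q|+d$ and $N_2^\prime\geq n_2+|p|$.

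The main obstacle — and essentially the only non-bookkeeping point — is recognizing that the two sum-weights combine \emph{cleanly} into the single target sum-weight $<\alpha^*+\beta^*>^{p+q}$ through Peetre's inequality, rather than obstructing one another, and that the resulting corrections are difference-weights absorbable by the freely adjustable off-diagonal decay. Once the four decay orders are chosen as above, taking first $n_1=n_2=0$ shows that the defining double series converges absolutely, so that $\M\M^\prime$ is well defined, while the bound for general $(n_1,n_2)$ establishes exactly the estimate \eqref{F-Mdecay} with exponent $p+q$, i.e. $\M\M^\prime\in\mathscr{M}^{p+q}_{\tGamma,\infty}$.
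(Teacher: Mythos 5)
Your proposal is correct and follows essentially the same route as the paper: bound each factor by the rapid off-diagonal decay estimate, use sub-multiplicativity of the difference weights $<\alpha-\beta>\le C<\alpha-\gamma><\gamma-\beta>$ together with Peetre's inequality to merge the two sum-weights into $<\alpha^*+\beta^*>^{p+q}$ at the cost of absorbable difference weights, and then choose the decay orders large enough (exceeding $n_1+d$, resp.\ $n_2+|p|+|q|+d$) to make the $\tGamma$-sum converge uniformly. Your bookkeeping is in fact slightly more careful than the paper's (explicit Peetre constants, and signs consistent with the convention $<\alpha^*+\beta^*>^{-p}$ in \eqref{F-Mdecay}), so nothing further is needed.
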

\begin{proof}
{ We apply the inequality \eqref{dhc2} twice, first with $s=p$, $x=\alpha^*+\gamma^*$ and $y=-\gamma^*+\beta^*$, and second with $s=q$, $x=\beta^*+\gamma^*$ and $y=-\gamma^*+\alpha^*$, } and obtain  
$$<\alpha^* +\beta^*>^{p+q}\, \leq \, { 2^{(|p|+|q|)/2}}<\alpha^*+\gamma^*>^p <\beta^*-\gamma^*>^{|p|} <\beta^*+\gamma^*>^q <\alpha^*-\gamma^*>^{|q|}.   $$
Then for any $(m_1,m_2)\in\mathbb{N}\times\mathbb{N}$  we have the bound:
\begin{align*}
&<\alpha-\beta>^{m_1}\, <\alpha^*-\beta^*>^{m_2} \, <\alpha^*+\beta^*>^{p+q}\big|(\M\cdot\M^\prime)_{\talpha,\tbeta}\big|\\
&\qquad \leq {2^{(|p|+|q|+m_1+m_2)/2}}\underset{\tgamma\in\tGamma}{\sum}\Big (<\alpha-\gamma>^{m_1}\, <\alpha^*-\gamma^*>^{m_2+|q|} \, <\alpha^*+\gamma^*>^{p}|\M_{\talpha,\tgamma}| \Big ) \times \\
&\qquad \qquad \times \Big (\, <\gamma-\beta>^{m_1}\, <\gamma^*-\beta^*>^{m_2+|p|} \, <\gamma^*+\beta^*>^{q}|\M^\prime_{\tgamma,\tbeta}|\Big).
\end{align*}
Using \eqref{F-Mdecay} with $n_1>m_1+d$ and $n_2>m_2+|p|+|q| +d$, we see that the series on the right hand side converges and is uniform in $\alpha,\alpha^*,\beta,\beta^*$.
\end{proof}

\begin{proposition}\label{P-prodMAOp}
	Let $p,q\in\mathbb{R}$, let $(\Phi,\Psi)\in S^p_0(\X\times \X^*)\times S^q_0(\X\times \X^*)$,  and consider their infinite matrices with respect to a magnetic Gabor frame $\big\{\mathcal{G}^A_{\alpha,\alpha^*}\big\}_{(\alpha,\alpha^*)\in\Gamma\times\Gamma_*}$. Then:
	\begin{subequations}\label{F-prodMAOp}
	\beq	\mathbb{M}^A\big(\Op^A(\Phi)\Op^A(\Psi)\big)\,=\,\mathbb{M}^A\big(\Op^A(\Phi)\big)\cdot\mathbb{M}^A\big(\Op^A(\Psi)\big)
	\eeq
	where
\beq 
	\big(\mathbb{M}_1\cdot\mathbb{M}_2\big)_{\alpha,\alpha^*;\beta,\beta^*}\,:=\,\underset{(\gamma,\gamma^*)\in\Gamma\times\Gamma_*}{\sum}\big(\mathbb{M}_1\big)_{\alpha,\alpha^*;\gamma,\gamma^*}\big(\mathbb{M}_2\big)_{\gamma,\gamma^*;\beta,\beta^*}.
	\eeq 
	\end{subequations}
\end{proposition}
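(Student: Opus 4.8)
The plan is to reduce the composition identity to the Parseval frame identity \eqref{F-PF} of Proposition \ref{P-Parseval-frame}, using the adjoint structure of the magnetic Weyl calculus to keep every vector honestly in $L^2(\X)$. First I would record the two structural facts I intend to use. Since $\Phi\in S^p_0(\X^*,\X)$ and $\Psi\in S^q_0(\X^*,\X)$, the same holds for their complex conjugates, and the magnetic Weyl quantization of a symbol in $S^p_0$ maps $\mathscr{S}(\X)$ continuously into itself (this is part of the magnetic pseudodifferential calculus of \cite{MP-1,IMP-1}). Consequently $\Op^A(\Psi)\mathcal{G}^A_{\tbeta}$ and $\Op^A(\overline{\Phi})\mathcal{G}^A_{\talpha}$ are Schwartz functions, the composition $\Op^A(\Phi)\Op^A(\Psi)$ is a well-defined continuous operator $\mathscr{S}(\X)\rightarrow\mathscr{S}(\X)$, and its matrix \eqref{dc4} is meaningful. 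The second fact is the adjoint relation $\Op^A(\Phi)^*=\Op^A(\overline{\Phi})$, which one checks directly on integral kernels using $\overline{\Lambda^A(y,x)}=\Lambda^A(x,y)$ and the midpoint $(x+y)/2$ in the symbol.

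With these in hand, the computation is short. Using the adjoint relation I would move the left factor onto $\mathcal{G}^A_{\talpha}$:
\[
\mathbb{M}^A\big[\Op^A(\Phi)\Op^A(\Psi)\big]_{\talpha,\tbeta}=\big(\mathcal{G}^A_{\talpha}\,,\,\Op^A(\Phi)\Op^A(\Psi)\mathcal{G}^A_{\tbeta}\big)_{L^2(\X)}=\big(\Op^A(\overline{\Phi})\mathcal{G}^A_{\talpha}\,,\,\Op^A(\Psi)\mathcal{G}^A_{\tbeta}\big)_{L^2(\X)}.
\]
Both entries are now Schwartz functions, so I can apply \eqref{F-PF} with $v:=\Op^A(\overline{\Phi})\mathcal{G}^A_{\talpha}$ and $f:=\Op^A(\Psi)\mathcal{G}^A_{\tbeta}$, inserting the resolution of the identity over $\tgamma\in\tGamma$. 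A final use of the adjoint relation identifies $\big(\Op^A(\overline{\Phi})\mathcal{G}^A_{\talpha},\mathcal{G}^A_{\tgamma}\big)_{L^2(\X)}=\big(\mathcal{G}^A_{\talpha},\Op^A(\Phi)\mathcal{G}^A_{\tgamma}\big)_{L^2(\X)}=\mathbb{M}^A[\Op^A(\Phi)]_{\talpha,\tgamma}$, while the remaining factor is $\mathbb{M}^A[\Op^A(\Psi)]_{\tgamma,\tbeta}$; this is precisely the matrix product in \eqref{F-prodMAOp}.

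For convergence I would note two complementary points. The series obtained from \eqref{F-PF} converges because the two frame-coefficient sequences lie in $\ell^2(\tGamma)$; independently, Theorem \ref{T-m-Gabor-frame} places the two matrices in $\mathscr{M}^p_{\tGamma,\infty}$ and $\mathscr{M}^q_{\tGamma,\infty}$, so Proposition \ref{P-comp-rdec-offd-m} guarantees that the product series converges absolutely to an entry of a matrix in $\mathscr{M}^{p+q}_{\tGamma,\infty}$, making the identity unambiguous. The main obstacle to watch for is the temptation to expand $\Op^A(\Psi)\mathcal{G}^A_{\tbeta}$ in the frame and then push the sum through $\Op^A(\Phi)$: when $p>0$ the operator $\Op^A(\Phi)$ need not be bounded on $L^2(\X)$, so passing an $L^2$-convergent series through it is not justified. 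The adjoint manoeuvre above is exactly what avoids this, since it applies $\Op^A(\Phi)$ only to the fixed Schwartz vector $\mathcal{G}^A_{\talpha}$ and reserves \eqref{F-PF} for genuine $L^2$ functions.
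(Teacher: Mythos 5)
Your proposal is correct and follows essentially the same route as the paper: move $\Op^A(\Phi)$ onto the left entry as $\Op^A(\overline{\Phi})$ via the adjoint relation, apply the Parseval identity \eqref{F-PF} to the resulting inner product of two Schwartz (hence $L^2$) functions, and re-identify the factors as matrix elements. Your added remarks on why one must not push the frame expansion through the possibly unbounded $\Op^A(\Phi)$, and on absolute convergence via Proposition \ref{P-comp-rdec-offd-m}, are sensible supplements but do not change the argument.
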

\begin{proof}
	Let us start with the definition of the left hand side in (\ref{F-prodMAOp}a) and write:
	\begin{align*}
		&\big(\mathbb{M}^A\big(\Op^A(\Phi)\Op^A(\Psi)\big)\big)_{\alpha,\alpha^*;\beta,\beta^*}\hspace*{-0.2cm}\\
		&\hspace*{2cm}=\big(\mathcal{G}^A_{\alpha,\alpha^*}\,,\,\Op^A(\Phi)\Op^A(\Psi)\mathcal{G}^A_{\beta,\beta^*}\big)_{L^2(\X)}=\big(\Op^A(\overline{\Phi})\mathcal{G}^A_{\alpha,\alpha^*}\,,\,\Op^A(\Psi)\mathcal{G}^A_{\beta,\beta^*}\big)_{L^2(\X)}\\
		&\hspace*{2cm}=\underset{(\gamma,\gamma^*)\in\Gamma\times\Gamma_*}{\sum}\overline{\big(\mathcal{G}^A_{\gamma,\gamma^*}\,,\,\Op^A(\overline{\Phi})\mathcal{G}^A_{\alpha,\alpha^*}\big)}_{L^2(\X)}\big(\mathcal{G}^A_{\gamma,\gamma^*}\,,\,\Op^A(\Psi)\mathcal{G}^A_{\alpha,\alpha^*}\big)_{L^2(\X)}\\
		&\hspace*{2cm}=\underset{(\gamma,\gamma^*)\in\Gamma\times\Gamma_*}{\sum}\big(\mathbb{M}^A\big(\Op^A(\Phi)\big)\big)_{\alpha,\alpha^*;\gamma,\gamma^*}\big(\mathbb{M}^A\big(\Op^A(\Psi)\big)\big)_{\gamma,\gamma^*;\beta,\beta^*}.
	\end{align*}
\end{proof}

\begin{remark}
	We notice that for any $\Phi\in S^0_0(\X^*,\X)$  we have the equalities:
	$$
	\big(\mathbb{M}^A\big(\Op^A(\overline{\Phi})\big)\big)_{\alpha,\alpha^*;\gamma,\gamma^*}\,=\,\big(\mathbb{M}^A\big(\Op^A(\Phi)^*\big)\big)_{\alpha,\alpha^*;\gamma,\gamma^*}\,=\,\overline{\big(\mathbb{M}^A\big(\Op^A(\Phi)\big)\big)}_{\gamma,\gamma^*;\alpha,\alpha^*}.
	$$
\end{remark}

\vspace{0.5cm}

\noindent Let us recall from \cite{MP-1,IMP-1} that  \textit{the \say{magnetic} Moyal product} $\sharp^B$ is defined by the equality:
$$
\Op^A\big(\phi\sharp^B\psi\big)\,:=\, \Op^A(\phi)\,\Op^A(\psi),\quad\forall(\phi,\psi)\in\mathscr{S}(\Xi)\times\mathscr{S}(\Xi)\,.
$$
 It is  given explicitly by the following  integral:
\beq\label{DF-mMprod}
\Big(\phi\sharp^B\psi\big)(X)=\pi^{-2d}\int_{\Xi\times\Xi}dY\,dY'\,e^{-2i(<\xi-\eta,x-y'>-<\xi-\eta',x-z>)}\,\omega^B_x(y,y')\,\phi(Y)\,\psi(Y')
\eeq
where  $\omega^B_x(y,y')$ is the exponential of $(-i)$ multiplied with the flux of $B$ through the triangle with vertices $x-y-y',x-y+y',x+y-y'$.\\
 As shown in \cite{MP-1,IMP-1}, this \say{magnetic} Moyal product may be extended as a composition law on a large class of tempered distributions on $\X\times\X^*$ that contains the H\"{o}rmander classes for all $p\in\R$.

By using  a direct combination of Propositions \ref{P-prodMAOp} and \ref{P-comp-rdec-offd-m} with Theorem \ref{T-m-Gabor-frame}, we get:
\begin{corollary}\label{coro1}
Given $(p,q)\in\mathbb{R}\times\mathbb{R}$ and $(\Phi,\Psi)\in S^p_0(\X\times \X^*)\times S^q_0(\X\times \X^*)$ we have that $\Phi\sharp^B\Psi\in S^{p+q}_0(\X\times \X^*)$.
\end{corollary}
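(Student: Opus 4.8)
The plan is to route everything through the matrix representation and simply concatenate the three tools that the preceding results were built for. Theorem \ref{T-m-Gabor-frame} converts membership in a Hörmander class into rapid off-diagonal decay of the associated Gabor matrix (in the sense of Notation \ref{N-rdec-offd-m}); Proposition \ref{P-prodMAOp} identifies the matrix of a composition of magnetic Weyl operators with the matrix product of the individual matrices; and Proposition \ref{P-comp-rdec-offd-m} shows that matrix multiplication adds the decay exponents $p$ and $q$. Chaining these three statements produces the result with essentially no new computation, exactly as announced just before the corollary.

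Concretely, I would first apply the implication (i)$\Rightarrow$(ii) of Theorem \ref{T-m-Gabor-frame} to record that $\mathbb{M}^A(\Op^A(\Phi))\in\mathscr{M}^p_{\tGamma,\infty}$ and $\mathbb{M}^A(\Op^A(\Psi))\in\mathscr{M}^q_{\tGamma,\infty}$. Proposition \ref{P-prodMAOp} then gives the factorization
\[
\mathbb{M}^A\big(\Op^A(\Phi)\,\Op^A(\Psi)\big)\,=\,\mathbb{M}^A\big(\Op^A(\Phi)\big)\cdot\mathbb{M}^A\big(\Op^A(\Psi)\big),
\]
and Proposition \ref{P-comp-rdec-offd-m} places the right-hand side in $\mathscr{M}^{p+q}_{\tGamma,\infty}$. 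Finally, the defining property of the magnetic Moyal product identifies $\Op^A(\Phi)\,\Op^A(\Psi)$ with $\Op^A(\Phi\sharp^B\Psi)$, so the matrix $\mathbb{M}^A\big(\Op^A(\Phi\sharp^B\Psi)\big)$ satisfies the decay estimate \eqref{F-Mdecay} with exponent $p+q$. Applying the converse implication (ii)$\Rightarrow$(i) of Theorem \ref{T-m-Gabor-frame} then yields $\Phi\sharp^B\Psi\in S^{p+q}_0(\X^*,\X)$, which is the claim.

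The only delicate point — and the part I expect to be the main obstacle — is that the whole chain must be read at the level of Hörmander symbols rather than Schwartz symbols. The explicit integral \eqref{DF-mMprod} and the operator identity $\Op^A(\phi\sharp^B\psi)=\Op^A(\phi)\,\Op^A(\psi)$ were only stated for $\phi,\psi\in\mathscr{S}(\Xi)$, so before invoking the argument I must know that $\Phi\sharp^B\Psi$ is a well-defined element of $\mathscr{S}^\prime(\Xi)$ and that the operator identity persists for symbols in $S^p_0$ and $S^q_0$. This is precisely the content of the extension of the magnetic Moyal product recalled immediately above the statement (see \cite{MP-1, IMP-1}): on the large distribution class containing all the Hörmander classes, $\sharp^B$ is a well-defined composition law and $\Op^A$ intertwines it with operator composition. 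Granting this, $\Op^A(\Phi\sharp^B\Psi)$ is a genuine element of $\mathcal{L}\big(\mathscr{S}(\X);\mathscr{S}^\prime(\X)\big)$ to which Theorem \ref{T-m-Gabor-frame} is applicable, and the argument closes.
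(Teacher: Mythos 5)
Your proposal is correct and follows exactly the route the paper intends: the paper proves the corollary by the one-line "direct combination of Propositions \ref{P-prodMAOp} and \ref{P-comp-rdec-offd-m} with Theorem \ref{T-m-Gabor-frame}", which is precisely the chain you spell out. Your additional remark about needing the extension of $\sharp^B$ to the H\"ormander classes (so that $\Phi\sharp^B\Psi\in\mathscr{S}^\prime(\Xi)$ and the operator identity persists) is also the paper's resolution, stated just before the corollary with reference to \cite{MP-1,IMP-1}.
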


\subsection{A magnetic version of the Calder\'on-Vaillancourt Theorem.}

Let us consider a symbol $F\in S^0_0(\X^*,\X)$ and a regular magnetic field $B$ obeying \eqref{dhc1}. In \cite{IMP-1} the following result is proven, using pseudo-differential calculus techniques:
\begin{theorem}\label{T-m-CV-Thm}
Under the  above assumptions, we have that $\Op^A(F)$ is bounded in  $L^2(\X)$. Moreover, there exist { $c(d)>0$,  $p(d)\in\mathbb{N}$,  and $N\geq 0$,  such that for all $F\in S^0_0(\X^*,\X)$ and any regular $B$ obeying \eqref{dhc1} we have 
$$
\big\|\Op^A(F)\big\|_{\mathcal L (L^2(\X))}\,\leq\,c(d)\big (1+|||B|||_N\big )\,\underset{|\alpha|\leq p(d)}{\max}\,\underset{|\beta|\leq p(d)}{\max}\,\underset{X\in\Xi}{\sup}\,\big|\big(\partial_x^\alpha\partial_\xi^\beta\,F\big)(X)\big|.
$$}
\end{theorem}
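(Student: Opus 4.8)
The plan is to read off both the boundedness and the quantitative estimate directly from the off-diagonal decay of the Gabor matrix, with no further pseudo-differential machinery. The pivot is that the analysis map $U:=\mathfrak{U}^A_{\mathcal{g},\Gamma}$ of Proposition \ref{P-Parseval-frame}(i) is an isometry of $L^2(\X)$ into $\ell^2(\tGamma)$, so $\|U\|=1$, its adjoint (the synthesis operator $U^*c=\sum_{\talpha}c_{\talpha}\mathcal{G}^A_{\talpha}$) also has norm $1$, and $U^*U=\id$. Since $F\in S^0_0(\X^*,\X)$, the operator $\Op^A(F)$ maps $\mathscr{S}(\X)$ into itself and its formal adjoint $\Op^A(\overline{F})$ does likewise, so $\mathscr{S}(\X)\subset\mathcal{D}(\Op^A(F)^*)$ and Proposition \ref{P-GF-Op-dec} applies. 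Reading \eqref{dc5} as the composition of three maps, it says exactly that $\Op^A(F)\varphi=U^*\,\mathbb{M}^A[\Op^A(F)]\,U\varphi$ for every $\varphi\in\mathscr{S}(\X)$. Writing $M:=\mathbb{M}^A[\Op^A(F)]$, this yields $\|\Op^A(F)\varphi\|_{L^2(\X)}\leq\|U^*\|\,\|M\|_{\mathcal L(\ell^2(\tGamma))}\,\|U\varphi\|_{\ell^2(\tGamma)}=\|M\|_{\mathcal L(\ell^2(\tGamma))}\,\|\varphi\|_{L^2(\X)}$; as $\mathscr{S}(\X)$ is dense, $\Op^A(F)$ extends to a bounded operator on $L^2(\X)$ with $\|\Op^A(F)\|_{\mathcal L(L^2(\X))}\leq\|M\|_{\mathcal L(\ell^2(\tGamma))}$.

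It then remains to bound the $\ell^2$-operator norm of $M$, for which I would use the Schur test fed by Theorem \ref{T-m-Gabor-frame}. The implication (i)$\Rightarrow$(ii), applied with $p=0$ and $n_1=n_2=d+1$, gives a constant $C_{d+1,d+1}(F,B)$ with $|M_{\talpha,\tbeta}|\leq C_{d+1,d+1}(F,B)<\alpha-\beta>^{-(d+1)}<\alpha^*-\beta^*>^{-(d+1)}$. Because $\Gamma$ and $\Gamma_*$ are rank-$d$ lattices, the constant $\kappa(d):=\big(\sum_{\gamma\in\Gamma}<\gamma>^{-(d+1)}\big)\big(\sum_{\gamma^*\in\Gamma_*}<\gamma^*>^{-(d+1)}\big)$ is finite, and both Schur sums $\sup_{\talpha}\sum_{\tbeta}|M_{\talpha,\tbeta}|$ and $\sup_{\tbeta}\sum_{\talpha}|M_{\talpha,\tbeta}|$ are bounded by $\kappa(d)\,C_{d+1,d+1}(F,B)$. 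The Schur test gives $\|M\|_{\mathcal L(\ell^2(\tGamma))}\leq\kappa(d)\,C_{d+1,d+1}(F,B)$, and hence the same bound for $\|\Op^A(F)\|_{\mathcal L(L^2(\X))}$.

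The only delicate point, and where I expect the real work to sit, is making $C_{d+1,d+1}(F,B)$ explicit in terms of finitely many seminorms of $F$ and of $|||B|||_N$. I would reopen the proof of (i)$\Rightarrow$(ii): with $p=0$ and $n_1=n_2=d+1$ the integers $m_1,m_2,m_3$ there are fixed multiples of $d$, so only finitely many integrations by parts occur. Tracking the Leibniz expansion, the $z$- and $\zeta$-integrations hit $\Phi=F$ with at most $2m_1$ position derivatives and at most $2m_2$ momentum derivatives, so one may take $p(d):=2\max(m_1,m_2)=O(d)$ and the $F$-dependence enters only through $\nu^0_{p(d),p(d)}(F)$; the $z$- and $v$-derivatives landing on $\Theta^B_{\mu,\nu}$ are of total order at most $N:=2(m_1+m_3)=O(d)$, so by \eqref{dc6} the geometric constant is controlled by the $C^N$-norm of $B$, which fixes $N$. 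The subtle issue is the \emph{degree} of this dependence: differentiating $\Omega^B=e^{-i\int B}$ up to order $N$ produces, a priori, a polynomial of degree $N$ in $|||B|||_N$ rather than the single factor $(1+|||B|||_N)$ stated. The scheme above thus delivers boundedness at once, together with an estimate of the form $c(d)\big(1+|||B|||_N\big)^{O(d)}\,\nu^0_{p(d),p(d)}(F)$; recovering the sharp linear $B$-dependence of \cite{IMP-1} requires either a more careful bookkeeping of these phase derivatives or an interpolation argument, and is the one genuinely technical obstacle.
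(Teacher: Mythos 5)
Your proposal is correct and follows essentially the same route as the paper: expand via the Parseval frame of Proposition \ref{P-Parseval-frame}, represent $\Op^A(F)$ through its Gabor matrix, and conclude by a Schur-type estimate from the off-diagonal decay of Theorem \ref{T-m-Gabor-frame} with $p=0$ (your factorization $U^*MU$ plus an explicit Schur test is in fact a cleaner rendering of the paper's argument, whose displayed identity should really be a Cauchy--Schwarz inequality). Your closing caveat about only obtaining a polynomial rather than linear dependence on $|||B|||_N$ is well taken but not a gap relative to the paper, which likewise proves only the boundedness by this method and defers the sharp quantitative form of the constant to \cite{IMP-1}.
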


\begin{proof} Let us present here a simple proof of the boundedness of the operator $\Op^A(F)$ in $L^2(\X)$, based on the use of magnetic Gabor matrices.
Given any $f\in \mathscr{S}(\X)$ we have $\Op^A(F)f\in L^2(\X)$ and 

{
$$\big\|\Op^A(F)f\big\|_{L^2(\X)}=\sup_{g\in \mathscr{S}(\X),\, \| g\|_{L^2(\X)}=1} \big \vert \big (g,\Op^A(F)f\big )_{L^2(\X)}\big \vert .$$

From \eqref{dc5'} we have:
\begin{align*}
\big \vert \big (g,\Op^A(F)f\big )_{L^2(\X)}\big \vert 
&\leq \underset{(\gamma,\gamma^*),\, (\alpha,\alpha^*)\in\Gamma\times\Gamma_*}{\sum} \big |\big(\mathcal{G}^A_{\gamma,\gamma^*},g\big)_{L^2(\X)}\big|\,\big |\MAF_{\gamma,\gamma^*;\alpha,\alpha^*}\big |\,\big |\big(\mathcal{G}^A_{\alpha,\alpha^*},\,f\big)_{L^2(\X)}\big|.
\end{align*}
 The estimate \eqref{F-Mdecay} applied with $p=0$ implies that there exists a constant $C_d(F,B)$ depending on a finite number of seminorms of $F$ and $B$ such that $$\big |\MAF_{\gamma,\gamma^*;\alpha,\alpha^*}\big | \leq C_d(F,B) \, <\gamma-\alpha>^{-d-1} \, <\gamma^*-\alpha^*>^{-d-1}.$$ Applying the Schur test in $\ell^2(\Gamma\times \Gamma^*)$ and using the isometric property from Proposition \ref{P-Parseval-frame}(i) we obtain that 
\begin{align*}
\big \vert \big (g,\Op^A(F)f\big )_{L^2(\X)}\big \vert 
\leq C_d(F,B) \Vert g\Vert_{L^2(\X)}\, \Vert f\Vert_{L^2(\X)}
\end{align*}
and we are done, up to a density argument. }

\end{proof}

\subsection{On the Beals commutator criterion}

Finally let us now complete the result in \cite{CHP-3} and also prove the reciprocal statement for the Beals criterion. 

In order to state this criterion let us recall the \say{basic symbols}: 
\begin{itemize}
\item the position coordinates $Q_j:=\Op^A(q_j)=\Op^0(q_j)$ with $q_j(x,\xi):=x_j$ for $1\leq j\leq d$ 
\item  the \say{magnetic} momenta $P^A_j:=\Op^A(p_j)=\Op^0(p_j)-A_j$ with $p_j(x,\xi):=\xi_j$ for $1\leq j\leq d$.
\end{itemize}
 Let us notice that the symbols $q_j$ for $1\leq j\leq d$ are not H\"{o}rmander type symbols and that the above operators are continuous as operators in $\mathscr{S}(\X)$ and respectively in $\mathscr{S}^\prime(\X)$.
\begin{theorem}\label{T-recBeals}
If $\Phi\in S^0_0(\X\times \X^*)$, then $\Op^A(\Phi)$ defines a bounded linear operator on $L^2(\X)$ having bounded repeated commutators of the form $[L_1,[L_2,\ldots[L_N,T]\ldots]]$ for any $N\in\mathbb{N}$ and any family $\{L_1,\ldots,L_N\big\}$ (void if $N=0$)
with $L_m$ any of the basic observables $\big\{Q_1,\ldots,Q_d,P^A_1,\ldots,P^A_d\big\}$.\\
 Conversely, assume that a linear map $T:\mathscr{S}(\X)\mapsto \mathscr{S}'(\X)$ can be extended to a bounded operator on $L^2(\X)$, and  all its possible commutators as above have the same property; then $T$ is a magnetic pseudo-differential operator with a symbol of class $S^0_0(\X\times \X^*)$.
\end{theorem}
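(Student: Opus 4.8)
The plan is to transport the whole statement to the level of the frame matrices $\M^A[\cdot]$ and to run everything off two closure facts. First, $\mathscr{M}^0_{\tGamma,\infty}$ is stable under matrix products (Proposition \ref{P-comp-rdec-offd-m} with $p=q=0$). Second, every matrix in $\mathscr{M}^0_{\tGamma,\infty}$ is bounded on $\ell^2(\tGamma)$, which follows from a Schur test using its rapid off-diagonal decay. Repeating the computation of Proposition \ref{P-prodMAOp} (using that the basic observables are symmetric on $\mathscr{S}(\X)$) shows that, on the relevant domains, the frame matrix turns operator products into matrix products, so that for any $L\in\{Q_1,\dots,Q_d,P^A_1,\dots,P^A_d\}$ one has the entrywise identity $\M^A\big[[L,T]\big]=\big[\M^A[L],\M^A[T]\big]$. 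The technical heart is the decomposition $\M^A[Q_j]=\q_j+\mathcal{R}_j$ and $\M^A[P^A_j]=c_j\,\p_j+\mathcal{R}'_j$ with $c_j\neq0$, where $\q_j,\p_j$ are the unbounded diagonal operators $(\q_j)_{\talpha,\tbeta}=\alpha_j\,\delta_{\talpha,\tbeta}$ and $(\p_j)_{\talpha,\tbeta}=\alpha^*_j\,\delta_{\talpha,\tbeta}$, while $\mathcal{R}_j,\mathcal{R}'_j\in\mathscr{M}^0_{\tGamma,\infty}$. For $Q_j$ this comes from writing $x_j=\alpha_j+(x_j-\alpha_j)$ on the support of $\mathcal{G}^A_\talpha$; for $P^A_j=\Op^0(p_j)-A_j$ the point is that the local gauge factor $\Lambda^A(\cdot,\alpha)$ in \eqref{DF-G-frame} absorbs the polynomially growing vector potential, leaving a remainder governed only by the curvature $B$ (bounded through \eqref{dhc1}) and the bounded variable $x-\alpha$.

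For the direct implication, assume $\Phi\in S^0_0(\X^*,\X)$. Then $\Op^A(\Phi)$ is $L^2$-bounded by Theorem \ref{T-m-CV-Thm}, and $\M:=\M^A[\Op^A(\Phi)]\in\mathscr{M}^0_{\tGamma,\infty}$ by Theorem \ref{T-m-Gabor-frame}. By the identity and decomposition above, $\M^A\big[[Q_j,\Op^A(\Phi)]\big]=[\q_j,\M]+[\mathcal{R}_j,\M]$: the first term multiplies the entries by $(\alpha_j-\beta_j)$ and hence stays in $\mathscr{M}^0_{\tGamma,\infty}$ (it only lowers by one the available decay in $<\alpha-\beta>$), and the second lies in $\mathscr{M}^0_{\tGamma,\infty}$ by Proposition \ref{P-comp-rdec-offd-m}; the $P^A_j$ case is identical with $<\alpha^*-\beta^*>$. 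Thus every commutator again has a matrix in $\mathscr{M}^0_{\tGamma,\infty}$, and iterating, then reading Theorem \ref{T-m-Gabor-frame} and Theorem \ref{T-m-CV-Thm} backwards, shows that each repeated commutator is an $\Op^A$ of an $S^0_0$ symbol, hence bounded on $L^2(\X)$.

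For the converse, set $\M:=\M^A[T]=\mathfrak{U}^A_{\g,\Gamma}\,T\,(\mathfrak{U}^A_{\g,\Gamma})^*$, which is bounded on $\ell^2(\tGamma)$ since $T$ is $L^2$-bounded and $\mathfrak{U}^A_{\g,\Gamma}$ is an isometry (Proposition \ref{P-Parseval-frame}). The goal is to prove that all \emph{clean} iterated commutators, obtained by replacing each $\M^A[Q_j]$, $\M^A[P^A_j]$ by its diagonal part $\q_j$, $\p_j$, are bounded on $\ell^2(\tGamma)$; since $[\q_j,\cdot]$ and $[\p_j,\cdot]$ act entrywise as multiplication by $(\alpha_j-\beta_j)$ and $(\alpha^*_j-\beta^*_j)$, this yields boundedness of $(\alpha-\beta)^a(\alpha^*-\beta^*)^b\,\M$ for all multi-indices. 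I would argue by strong induction on the number $N$ of commutators: $N=0$ is the boundedness of $\M$; for the step, expand the genuine (by hypothesis bounded) $N$-fold commutator via $\M^A[Q_j]=\q_j+\mathcal{R}_j$, $\M^A[P^A_j]=c_j\p_j+\mathcal{R}'_j$ into $2^N$ terms. The single purely diagonal term is the clean commutator sought; every other term contains at least one remainder factor $\mathcal{R}$. Using the Leibniz rule for $[\q_j,\cdot]$ and $[\p_j,\cdot]$ together with the fact that these operations preserve $\mathscr{M}^0_{\tGamma,\infty}$, each such term rewrites as a finite sum of products of elements of $\mathscr{M}^0_{\tGamma,\infty}$ with clean commutators of $\M$ of order at most $N-1$; by the induction hypothesis and the $\ell^2$-boundedness of $\mathscr{M}^0_{\tGamma,\infty}$, all of these are bounded. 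Subtracting them from the bounded $N$-fold genuine commutator gives boundedness of the clean one, closing the induction.

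Boundedness of $(\alpha-\beta)^a(\alpha^*-\beta^*)^b\,\M$ on $\ell^2(\tGamma)$ for all $a,b$ yields, since every matrix entry is dominated by the operator norm, the uniform bounds $\sup_{\talpha,\tbeta}<\alpha-\beta>^{n_1}<\alpha^*-\beta^*>^{n_2}\big|\M_{\talpha,\tbeta}\big|<\infty$ for all $(n_1,n_2)$, which is exactly \eqref{F-Mdecay} with $p=0$. As $T\in\mathcal{L}\big(\mathscr{S}(\X);\mathscr{S}'(\X)\big)$ and $\Op^A$ is an isomorphism from $\mathscr{S}'(\Xi)$ onto that space, there is a unique $\Phi\in\mathscr{S}'(\Xi)$ with $T=\Op^A(\Phi)$ and $\M=\M^A[\Op^A(\Phi)]$; the implication (ii)$\Rightarrow$(i) of Theorem \ref{T-m-Gabor-frame} then gives $\Phi\in S^0_0(\X^*,\X)$. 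The main obstacle is the decomposition of the first paragraph, specifically the magnetic momentum: one must verify that gauge covariance makes the vector potential contribute nothing unbounded, so that the full $P^A_j$-remainder has rapid off-diagonal decay controlled uniformly by the seminorms $|||B|||_N$. The combinatorial induction above, though the conceptual core of the reciprocal statement, then becomes routine bookkeeping once the Leibniz rule and the stability of $\mathscr{M}^0_{\tGamma,\infty}$ under $[\q_j,\cdot]$, $[\p_j,\cdot]$ and products are in hand.
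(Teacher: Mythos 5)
Your overall architecture (transport everything into the matrix class $\mathscr{M}^0_{\tGamma,\infty}$, Schur test, product formula) is reasonable, but the step you yourself single out as the technical heart is false as stated, and the error propagates. The decomposition $\M^A[Q_j]=\q_j+\mathcal{R}_j$ with $(\q_j)_{\talpha,\tbeta}=\alpha_j\delta_{\talpha,\tbeta}$ and $\mathcal{R}_j\in\mathscr{M}^0_{\tGamma,\infty}$ cannot hold, because the magnetic Gabor system is a Parseval frame, \emph{not} an orthonormal basis. Writing $x_j=\alpha_j+(x_j-\alpha_j)$ only gives
\begin{equation*}
\M^A[Q_j]_{\talpha,\tbeta}=\alpha_j\,G_{\talpha,\tbeta}+\big((x_j-\alpha_j)\mathcal{G}^A_{\talpha},\mathcal{G}^A_{\tbeta}\big)_{L^2(\X)},\qquad G_{\talpha,\tbeta}:=\big(\mathcal{G}^A_{\talpha},\mathcal{G}^A_{\tbeta}\big)_{L^2(\X)},
\end{equation*}
and the Gram matrix $G$ is not the identity: for $\alpha=\beta$ and $\alpha^*\neq\beta^*$ one gets $G_{\talpha,\tbeta}=(2\pi)^{-d}\int_{\X}\mathcal{g}(x)^2e^{\frac{i}{2\pi}<\beta^*-\alpha^*,x>}dx$, a fixed generically nonzero number independent of $\alpha$ (the normalization \eqref{dc3} only kills the Fourier coefficients at frequencies in $2\pi\Z^d$, not those in $\Z^d$). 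Hence $\alpha_j(G_{\talpha,\tbeta}-\delta_{\talpha,\tbeta})$ is unbounded, so $\mathcal{R}_j=\q_j(G-\delta)+(\text{something in }\mathscr{M}^0_{\tGamma,\infty})$ is neither in $\mathscr{M}^0_{\tGamma,\infty}$ nor even bounded on $\ell^2(\tGamma)$. In the direct implication this is repairable: since $G=\M^A[{\rm Id}]$ is a projection and $G\cdot\M^A[T]=\M^A[T]\cdot G=\M^A[T]$, you can peel $\alpha_j$ off on the left factor and $\beta_j$ off on the right factor and still land on $[\M^A[Q_j],\M]=[\q_j,\M]+(\mathscr{M}^0_{\tGamma,\infty}\text{-terms})$. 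In the converse it is more serious: your induction multiplies the remainders $\mathcal{R}_j$ against clean commutators $[\q_{j_1},[\dots,\M]\dots]$ which are only known to be $\ell^2$-bounded, and which do \emph{not} satisfy $G(\cdot)=(\cdot)$ because $\q_j$ does not commute with $G$; so the unbounded piece $\q_j(G-\delta)$ is not automatically annihilated, and the ``routine bookkeeping'' does not close as written. This needs a genuine additional argument (this is essentially the content of the reference \cite{CHP-3} that the paper invokes).

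For comparison, the paper's proof of the direct implication avoids the matrix algebra entirely: it computes from \eqref{DF-mMprod} that $q_j\sharp^B\Phi-\Phi\sharp^Bq_j=i\partial_{\xi_j}\Phi$ and $p_j\sharp^B\Phi-\Phi\sharp^Bp_j=-i\partial_{x_j}\Phi+\sum_kB_{jk}\partial_{\xi_k}\Phi$, so every iterated commutator is again $\Op^A$ of an explicit $S^0_0$ symbol, and Theorem \ref{T-m-CV-Thm} finishes. That route is both shorter and immune to the Gram-matrix issue above; I would recommend adopting it for the first half, and being explicit that the second half requires controlling the off-range part of $\q_j$ and $\p_j$ (or citing \cite{CHP-3}) rather than treating it as bookkeeping. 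The remaining ingredients of your converse (entrywise bounds from operator norms, then Theorem \ref{T-m-Gabor-frame}(ii)$\Rightarrow$(i) with $p=0$) do match the strategy sketched in the paper.
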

\begin{proof}
Let us prove the direct implication. Using \eqref{DF-mMprod} we have:
\begin{equation*}
\big[Q_j,\Op^A(\Phi)\big]=\Op^A\big(q_j\sharp^B\Phi-\Phi\sharp^Bq_j\big),
\end{equation*}
with 
\begin{align*}
&\big(q_j\sharp^B\Phi-\Phi\sharp^Bq_j\big)(X)\\
&\quad =\pi^{-2d} \int_{\Xi\times\Xi}dY\,dY'\,e^{-2i(<\xi-\eta,x-y'>-<\xi-\eta',x-y>)}\,\omega^B_x(y,y')\,\big(y_j\Phi(y',\eta')-y_j'\Phi(y,\eta)\big)\\
&\quad =i(\partial_{\xi_j}\Phi)(x,\xi)\,,
\end{align*}
and
\begin{equation*}
\big[P^A_j,\Op^A(\Phi)\big]=\Op^A\big(p_j\sharp^B\Phi-\Phi\sharp^Bp_j\big)\,,
\end{equation*}
with

\begin{align*}
&\big(p_j\sharp^B\Phi-\Phi\sharp^Bp_j\big)(x,\xi)\\
&=\pi^{-2d}\int_{\Xi\times\Xi}dY\,dY'\,e^{-2i(<\xi-\eta,x-y'>-<\xi-\eta',x-y>)}\,\omega^B_x(y,y')\,\big(\eta_j\Phi(y',\eta')-\eta_j'\Phi(y,\eta)\big)\\
&=-i\big(\partial_{x_j}\Phi\big)(x,\xi)\,+\,(2\pi)^{-d}\int_{\X}dv\int_{\X^*}d\eta\,e^{-i<\xi-\eta,v>}\Phi(x,\eta)\underset{1\leq k\leq d}{\sum}v_k\int_{-1/2}^{1/2}dr\,B_{kj}(x+rv)\\
&=-i\big(\partial_{x_j}\Phi\big)(x,\xi)\,+\,i(2\pi)^{-d}\int_{\X}dv\int_{\X^*}d\eta\,e^{-i<\xi-\eta,v>}\underset{1\leq k\leq d}{\sum}\big(\partial_{\eta_k}\Phi\big)(x,\eta)\int_{-1/2}^{1/2}dr\,B_{kj}(x+rv).
\end{align*}

We see that both above commutators have symbols in $S^0_0(\X\times\X^*)$, which remains true regardless how many commutators we perform afterwards. Then the Calder\'on-Vaillancourt theorem implies that these commutators can be extended to bounded operators on $L^2(\X)$.

The converse statement has been proved in \cite{CHP-3}, but now we can give a more transparent explanation of the strategy used there. The main idea was to first show that the matrix elements of $\mathbb{M}^A(T)$ obey an estimate like in \eqref{F-Mdecay}, and secondly, to  construct a symbol \say{by hand}, similarly with what we do here in Theorem \ref{T-m-Gabor-frame}(ii). 

The main idea behind the proof of \eqref{F-Mdecay} is the following: knowing that operators like $[Q_{j_1},[Q_{j_2},[....,[Q_{j_n},T]...]$ extend to bounded operators on $L^2(\X)$, the matrix element of $T$ must decay faster than any power of $<\alpha-\alpha'>$. Also, the boundedness of $[P^A_{j_1},[P^A_{j_2},[....,[P^A_{j_n},T]...]$ plus integration by parts, leads to fast decay in $<\alpha^*-\beta^*>$.  All details may be found in \cite{CHP-3}. 
\end{proof}

\subsection{Local Schatten-class properties}
{
For the non-magnetic case, a lot of results of this type are available in the literature \cite{A, R}. They not only give optimal decay conditions on the symbol, but also on its minimal regularity. Within our class of magnetic operators, we give a result which is close to be optimal even for the non-magnetic case:

\begin{theorem}\label{thm-trace}
  Let   $\Phi\in S^p_0(\X\times \X^*)$ with $p<-d$. Then for every $q>d$, the operators   $<\cdot>^{-q}\, \Op^A(\Phi)$ and $<\cdot>^{-q/2}\, \Op^A(\Phi)\, <\cdot>^{-q/2}$ are trace class. Moreover, if $p<-d/2$ and $r>d/2$, then $ \Op^A(\Phi)\, <\cdot>^{-r}$ is Hilbert-Schmidt. 
\end{theorem}
\begin{proof}
The integral kernel of $<\cdot>^{-q}\, \Op^A(\Phi)$ is 
    $$
	\underset{(\talpha,\tbeta)\in\tGamma\times\tGamma}{\sum}\mathbb{M}^A[\Op^A(\Phi)]_{\talpha,\tbeta}\, <x>^{-q}\mathcal{G}^A_{\talpha}(x)\, \overline{\mathcal{G}^A_{\tbeta}}(y),
	$$
hence this operator can be seen as a series of rank-one operators. There exists a constant $C$ such that the trace norm of these rank-one operators is bounded by $C <\alpha>^{-q}$ uniformly in $\alpha^*$ and $\tbeta=(\beta,\beta^*)$.  Hence $<\cdot>^{-q}\, \Op^A(\Phi)$ is trace class if we can prove that 
$$\underset{\alpha^*, \beta^*\in \Gamma^*}{\sum}\,\, \underset{\alpha,\beta\in \Gamma}{\sum}\,<\alpha>^{-q}\, \big | \mathbb{M}^A[\Op^A(\Phi)]_{\talpha,\tbeta}\big | \, <\infty.$$
Let us choose $n_1=n_2=d+1$ in \eqref{F-Mdecay}. Then we have 
\begin{align*}
& <\alpha>^{-q}\, \big | \mathbb{M}^A[\Op^A(\Phi)]_{\talpha,\tbeta}\big | \leq \\
&\qquad \leq C_{n_1n_2}(\Phi,B)\, <\beta-\alpha>^{-d-1}<\alpha>^{-q}<\alpha^*-\beta^*>^{-d-1} <\alpha^*+\beta^*>^{p}.
\end{align*}
Since $p<-d$ and $q>d$, the series is convergent. 

The proof for $<\cdot>^{-q/2} \Op^A(\Phi) <\cdot>^{-q/2}$ is similar; here we need to show 
$$\sum_{\alpha,\beta\in\Gamma} <\beta>^{-q/2}<\beta-\alpha>^{-d-1}<\alpha>^{-q/2}<\infty,$$
which again is a consequence of the Young inequality for convolutions. 
Finally, denoting by 
$ T$ the operator $\Op^A(\Phi)\, <\cdot>^{-r}$, we have $T^*T=<\cdot>^{-r}\, \Op^A(\overline{\Phi})\,\Op^A(\Phi)\,<\cdot>^{-r}$. The symbol of the product in the middle  belongs to $S_0^{2p}$ with $2p<-d$ while $r>d/2$, hence $T^*T$ is trace-class. 
\end{proof}

}

\end{document}